\documentclass[a4paper]{amsart}
\usepackage{graphicx}
\usepackage{amsmath}
\usepackage{amssymb}
\usepackage{oldgerm}
\usepackage[all]{xy}

\newcommand{\Hom}{\operatorname{Hom}\nolimits}
\newcommand{\End}{\operatorname{End}\nolimits}

\renewcommand{\mod}{\operatorname{mod}\nolimits}

\newcommand{\gr}{\operatorname{gr}\nolimits}

\newcommand{\Pd}{\operatorname{pd}\nolimits}
\newcommand{\gldim}{\operatorname{gldim}\nolimits}
\newcommand{\repdim}{\operatorname{repdim}\nolimits}

\newtheorem{theorem}{Theorem}[section]

\newtheorem{corollary}[theorem]{Corollary}

\newtheorem{lemma}[theorem]{Lemma}
\newtheorem{proposition}[theorem]{Proposition}
\theoremstyle{definition}

\theoremstyle{definition}

\theoremstyle{definition}

\theoremstyle{definition}

\theoremstyle{definition}

\theoremstyle{definition}

\theoremstyle{remark}
\newtheorem*{remark}{Remark}
\theoremstyle{remark}
\newtheorem*{remarks}{Remarks}
\theoremstyle{definition}

\theoremstyle{definition}

\begin{document}
\title{The representation dimension of quantum complete intersections}
\author{Petter Andreas Bergh \& Steffen Oppermann}
\address{Petter Andreas Bergh \newline Institutt for matematiske fag \\
  NTNU \\ N-7491 Trondheim \\ Norway}
\email{bergh@math.ntnu.no}
\address{Steffen Oppermann \newline Institutt for matematiske fag \\
  NTNU \\ N-7491 Trondheim \\ Norway}
\email{opperman@math.ntnu.no}

%\date{\today}

\subjclass[2000]{16G60, 16S80, 16U80, 81R50}

\keywords{Representation dimension, quantum complete intersections}

\maketitle

\begin{abstract}
We give a lower and an upper bound for the representation dimension
of a quantum complete intersection.
\end{abstract}

\section{Introduction}\label{intro}

In \cite{Auslander1}, Auslander introduced the representation
dimension of an Artin algebra in order to study algebras of infinite
representation type. He showed that a non-semisimple algebra is of
finite type if and only if its representation dimension is exactly
two, whereas it is of infinite type if and only if the
representation dimension is at least three.

For more than three decades no general method for computing the
representation dimension was known, and it was even unclear if this
dimension could exceed three. A negative answer to the latter would
imply that the finitistic dimension conjecture holds (cf.\
\cite{Igusa}). However, in 2006 Rouquier showed in \cite{Rouquier2}
that the representation dimension of the exterior algebra on a
$d$-dimensional vector space is $d+1$, using the notion of the
dimension of a triangulated category (cf.\ \cite{Rouquier1}). In
particular, there exist finite dimensional algebras of arbitrarily
large representation dimension. Other examples illustrating this
were subsequently given in \cite{Krause} and \cite{Oppermann1}.

In this paper, we study the representation dimension of quantum
complete intersections, a class of algebras originating from work by
Manin (cf.\ \cite{Manin}) and Avramov, Gasharov and Peeva (cf.\
\cite{Avramov}). In particular, under some assumptions we show that
the representation dimension of such an algebra is strictly greater
than its codimension.

\section{Representation dimension}\label{repdim}

Let $\Lambda$ be an Artin algebra, and denote by $\mod \Lambda$ the
category of finitely generated $\Lambda$-modules. The
\emph{representation dimension} of $\Lambda$, denoted $\repdim
\Lambda$, is defined as
$$\repdim \Lambda \stackrel{\text{def}}{=} \inf \{ \gldim
\End_{\Lambda}(M) \mid M \text{ generates and cogenerates } \mod
\Lambda \},$$ where $\gldim$ denotes the global dimension of an
algebra. A priori we see that the representation dimension may be
infinite, but Iyama showed in \cite{Iyama} that it is finite for
every Artin algebra. Auslander himself showed that the
representation dimension of a selfinjective algebra is at most its
Loewy length.

When computing the representation dimension of the exterior
algebras, Rouquier used the notion of dimensions of triangulated
categories, a concept he introduced in \cite{Rouquier1}. As we will
also use this notion when computing a lower bound for the
representation dimension of a quantum complete intersection, we
recall the definitions. Let $\mathcal{T}$ be a triangulated
category, and let $\mathcal{C}$ and $\mathcal{D}$ be subcategories
of $\mathcal{T}$. We denote by $\langle \mathcal{C} \rangle$ the
full subcategory of $\mathcal{T}$ consisting of all the direct
summands of finite direct sums of shifts of objects in
$\mathcal{C}$. Furthermore, we denote by $\mathcal{C} \ast
\mathcal{D}$ the full subcategory of $\mathcal{T}$ consisting of
objects $M$ such that there exists a distinguished triangle
$$C \to M \to D \to C[1]$$
in $\mathcal{T}$, with $C \in \mathcal{C}$ and $D \in \mathcal{D}$.
Finally, we denote the subcategory $\langle \mathcal{C} \ast
\mathcal{D} \rangle$ by $\mathcal{C} \diamond \mathcal{D}$. Now
define $\langle \mathcal{C} \rangle_0$ and $\langle \mathcal{C}
\rangle_1$ to be $0$ and $\langle \mathcal{C} \rangle$,
respectively, and for each $n \ge 2$ define inductively $\langle
\mathcal{C} \rangle_n$ to be $\langle \mathcal{C} \rangle_{n-1}
\diamond \langle \mathcal{C} \rangle$. The \emph{dimension} of
$\mathcal{T}$, denoted $\dim \mathcal{T}$, is defined as
$$\dim \mathcal{T} \stackrel{\text{def}}{=} \inf \{ d \in
\mathbb{Z} \mid \text{ there exists an object } M \in \mathcal{T}
\text{ such that } \mathcal{T} = \langle M \rangle_{d+1} \}.$$

From the definition, we see that the dimension of a triangulated
category is not necessarily finite. However, the following
elementary result provides a method for computing an upper bound in
terms of dense subcategories.

\begin{proposition}\cite[Lemma 3.4]{Rouquier1}\label{densesubcat}
If $F \colon \mathcal{S} \to \mathcal{T}$ is a functor of
triangulated categories whose image is dense in $\mathcal{T}$, then
$\dim \mathcal{T} \le \dim \mathcal{S}$.
\end{proposition}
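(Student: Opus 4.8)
The plan is to show that whenever $\mathcal{S} = \langle M \rangle_{d+1}$ for some object $M$ and some integer $d$, one has $\mathcal{T} = \langle F(M) \rangle_{d+1}$. Taking $d = \dim \mathcal{S}$ (and noting the asserted inequality is vacuous when $\dim \mathcal{S} = \infty$) then yields $\dim \mathcal{T} \le \dim \mathcal{S}$. This reduces the proposition to two closure statements about the building operations $\langle - \rangle$, $\ast$ and $\diamond$, each established by a short induction.

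First I would record how $F$ interacts with these operations. Since $F$ is a functor of triangulated categories it is additive, commutes with the shift, and sends distinguished triangles to distinguished triangles. The first two properties give $F(\langle \mathcal{C} \rangle) \subseteq \langle F\mathcal{C} \rangle$, because shifts, finite direct sums and direct summands are all preserved by an additive functor commuting with $[1]$; here $F\mathcal{C}$ denotes the image of $\mathcal{C}$. Applying $F$ to the defining triangle of an object of $\mathcal{C} \ast \mathcal{D}$ gives $F(\mathcal{C} \ast \mathcal{D}) \subseteq F\mathcal{C} \ast F\mathcal{D}$. Combining these, $F(\mathcal{C} \diamond \mathcal{D}) = F(\langle \mathcal{C} \ast \mathcal{D} \rangle) \subseteq \langle F(\mathcal{C} \ast \mathcal{D}) \rangle \subseteq \langle F\mathcal{C} \ast F\mathcal{D} \rangle = F\mathcal{C} \diamond F\mathcal{D}$, and an induction on $n$ then shows $F(\langle M \rangle_n) \subseteq \langle F(M) \rangle_n$ for every $n$.

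Second I would observe that each $\langle \mathcal{C} \rangle_n$ is closed under direct summands: for $n = 1$ this is immediate from the definition of $\langle - \rangle$, and for $n \ge 2$ it holds because $\langle \mathcal{C} \rangle_n = \langle \mathcal{C} \rangle_{n-1} \diamond \langle \mathcal{C} \rangle$ is itself of the form $\langle - \rangle$, into which summand-closure is built. With both facts in hand the conclusion is immediate. Density of the image means every object $T$ of $\mathcal{T}$ is a direct summand of $F(S)$ for some $S \in \mathcal{S} = \langle M \rangle_{d+1}$. By the first step $F(S) \in \langle F(M) \rangle_{d+1}$, and by the second step this subcategory is closed under summands, so $T \in \langle F(M) \rangle_{d+1}$. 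Hence $\mathcal{T} = \langle F(M) \rangle_{d+1}$ and $\dim \mathcal{T} \le d$.

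I expect the only real subtlety to lie in the inductive step $F(\mathcal{C} \diamond \mathcal{D}) \subseteq F\mathcal{C} \diamond F\mathcal{D}$: one must check carefully that applying $F$ to a direct summand of a finite direct sum of shifts of objects of $\mathcal{C} \ast \mathcal{D}$ again lands inside $\langle F\mathcal{C} \ast F\mathcal{D} \rangle$, which uses both that $F$ respects all the relevant structure and that $\langle - \rangle$ absorbs shifts, finite sums and summands. Everything else is formal.
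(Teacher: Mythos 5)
Your proof is correct. The paper itself gives no argument for this proposition --- it is quoted directly from Rouquier's \emph{Dimensions of triangulated categories} as Lemma 3.4 --- and your argument (push $\langle M\rangle_{d+1}$ through $F$ using that a triangulated functor preserves shifts, finite sums, summands and triangles, then absorb the remaining direct summands using density and the summand-closure of $\langle F(M)\rangle_{d+1}$) is exactly the standard proof given there.
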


We will use this result to show that for any quantum complete
intersection of codimension $n$, there exists a chain of $n$
subalgebras and a corresponding chain of $n-1$ inequalities bounding
the representation dimension from below. When proving both this and
the main results, the triangulated category we use is the
\emph{stable module category} of the algebra. Recall therefore that
when $\Lambda$ is a selfinjective algebra, its stable module
category, denoted $\underline{\mod} \Lambda$, is defined as follows:
the objects of $\underline{\mod} \Lambda$ are the same as in $\mod
\Lambda$, but two morphisms in $\mod \Lambda$ are equal in
$\underline{\mod} \Lambda$ whenever their difference factors through
a projective $\Lambda$-module. The cosyzygy functor
$\Omega_{\Lambda}^{-1} \colon \underline{\mod} \Lambda \to
\underline{\mod} \Lambda$ is an equivalence of categories, and a
triangulation of $\underline{\mod} \Lambda$ is given by using this
functor as a shift and by letting short exact sequences in $\mod
\Lambda$ correspond to triangles. Thus $\underline{\mod} \Lambda$ is
a triangulated category, and its dimension is related to the
representation dimension of $\Lambda$ by the following result.

\begin{proposition}\cite[Proposition 3.7]{Rouquier2}\label{lowerbound}
If $\Lambda$ is a non-semisimple selfinjective algebra, then
$\repdim \Lambda \ge \dim ( \underline{\mod} \Lambda ) +2$.
\end{proposition}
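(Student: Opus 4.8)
The plan is to realize $\repdim \Lambda$ as two more than the supremum, over all modules, of the lengths of $\operatorname{add} M$-resolutions, and then to read off such a resolution as a filtration in the stable category.

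First I would fix a generator-cogenerator $M$ of $\mod \Lambda$ attaining the infimum, so that $d := \repdim \Lambda = \gldim \End_\Lambda(M)$, and set $\Gamma = \End_\Lambda(M)$ and $F = \Hom_\Lambda(M,-)$. The functor $F$ restricts to an equivalence between $\operatorname{add} M$ and the projective $\Gamma$-modules, and it carries a right $\operatorname{add} M$-approximation sequence to an exact sequence of $\Gamma$-modules. The central point, due to Auslander, is that for every $X \in \mod \Lambda$ the minimal $\operatorname{add} M$-resolution
$$0 \to M_{d-2} \to \cdots \to M_1 \to M_0 \to X \to 0$$
has length at most $d-2$. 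I would prove this by comparing projective dimensions over $\Gamma$: applying $F$ turns such a resolution into a projective resolution of $F(X)$, so its length equals $\Pd_\Gamma F(X)$. Since every $\Gamma$-module $N$ occurs in a presentation $F(M_1) \to F(M_0) \to N \to 0$ arising from a morphism in $\operatorname{add} M$, one obtains $\Pd_\Gamma N \le 2 + \sup_X \Pd_\Gamma F(X)$, whence $\gldim \Gamma = 2 + \sup_X \Pd_\Gamma F(X)$. This identity is exactly where the summand $2$ originates, and establishing it cleanly is the main obstacle in the argument.

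Next I would transport the resolution into $\underline{\mod} \Lambda$. Writing $\overline{M}$ for the image of $M$, I would break the length-$(d-2)$ resolution into short exact sequences $0 \to K_{i+1} \to M_i \to K_i \to 0$ with $K_0 = X$ and $K_{d-2} = M_{d-2}$, each of which is a distinguished triangle in the stable category. Rotating the triangle exhibits $K_i$ in $\langle \overline{M} \rangle_1 \ast \langle \overline{M} \rangle_{d-2-i}[1]$, so using the inclusion $\langle \overline{M} \rangle_a \diamond \langle \overline{M} \rangle_b \subseteq \langle \overline{M} \rangle_{a+b}$ and descending induction on $i$ I would show $K_i \in \langle \overline{M} \rangle_{d-1-i}$, with base case $K_{d-2} = M_{d-2} \in \langle \overline{M} \rangle_1$. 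In particular the module $X = K_0$ lies in $\langle \overline{M} \rangle_{d-1}$.

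Since $X$ was arbitrary, this gives $\underline{\mod} \Lambda = \langle \overline{M} \rangle_{d-1}$, and therefore $\dim(\underline{\mod} \Lambda) \le d-2 = \repdim \Lambda - 2$ directly from the definition of the dimension of a triangulated category. The non-semisimplicity hypothesis guarantees that $\underline{\mod} \Lambda$ is a nonzero triangulated category, so that this comparison is meaningful. Rearranging yields the claimed inequality $\repdim \Lambda \ge \dim(\underline{\mod} \Lambda) + 2$.
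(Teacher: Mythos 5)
The paper itself gives no proof of this proposition---it is quoted directly from Rouquier---so the comparison here is with the argument of \cite[Proposition 3.7]{Rouquier2}, and your overall architecture is exactly that argument: use Auslander's characterization of $\gldim \End_\Lambda(M)$ in terms of $\operatorname{add}M$-resolutions, then read the length-$(d-2)$ resolution as a tower of triangles in $\underline{\mod}\Lambda$. The second half of your write-up (the descending induction showing $K_i \in \langle \overline{M}\rangle_{d-1-i}$, hence $X \in \langle \overline{M}\rangle_{d-1}$ and $\dim(\underline{\mod}\Lambda) \le d-2$) is correct as stated.

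The gap sits precisely at the step you yourself flag as the main obstacle. What the proof needs is the inequality $\Pd_\Gamma F(X) \le \gldim \Gamma - 2$ for every $X \in \mod\Lambda$, i.e.\ that the minimal $\operatorname{add}M$-resolution has length at most $d-2$. The argument you offer---presenting an arbitrary $\Gamma$-module $N$ by a morphism in $\operatorname{add}M$ and bounding $\Pd_\Gamma N \le 2 + \Pd_\Gamma F(\Ker)$---proves only the \emph{opposite} inequality $\gldim\Gamma \le 2 + \sup_X \Pd_\Gamma F(X)$, and the ``whence'' upgrading this to an equality is unjustified. The tell is that your argument uses only that $M$ is a generator, whereas the direction you actually need is exactly where the cogenerator hypothesis enters: since $M$ cogenerates $\mod\Lambda$, one can choose an exact sequence $0 \to X \to M^0 \to M^1$ with $M^i \in \operatorname{add}M$ (injective envelopes of $X$ and of $M^0/X$ lie in $\operatorname{add}M$); left exactness of $F = \Hom_\Lambda(M,-)$ then exhibits $F(X)$, up to projective summands, as $\Omega_\Gamma^2 C$ for $C = \Coker\bigl(F(M^0) \to F(M^1)\bigr)$, whence $\Pd_\Gamma F(X) \le \max(0, \Pd_\Gamma C - 2) \le \gldim\Gamma - 2$ (using $\gldim\Gamma \ge 2$, which holds as $\Lambda$ is not semisimple). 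With this ingredient supplied, the rest of your proof goes through.
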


We end this section with the following lemma. It gives a lower bound
for the dimension of the stable module category of a selfinjective
algebra, in terms of certain subalgebras.

\begin{lemma}\label{subalgebra}
\sloppy Let $A$ and $B$ be finite dimensional selfinjective
$k$-algebras. If there exist algebra homomorphisms
$$A \xrightarrow{i} B \xrightarrow{\pi} A$$
such that $\pi i$ is the identity and such that $B$ is a projective
$A$-module, then $\dim ( \underline{\mod} A)  \le \dim (
\underline{\mod} B)$.
\end{lemma}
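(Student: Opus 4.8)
The plan is to exhibit a functor of triangulated categories from $\underline{\mod} B$ to $\underline{\mod} A$ whose image is dense, and then invoke Proposition \ref{densesubcat}. The natural candidate is restriction of scalars along $i$. First I would set up the two restriction functors: along $\pi \colon B \to A$ one obtains $\pi^* \colon \mod A \to \mod B$, turning an $A$-module into a $B$-module through $\pi$, and along $i \colon A \to B$ one obtains $i^* \colon \mod B \to \mod A$. Because $\pi i = \mathrm{id}_A$, a direct check on the module actions ($a \cdot m = i(a)\cdot_{\pi^*M} m = (\pi i)(a)m = am$) shows $i^* \pi^* = \mathrm{id}_{\mod A}$; in particular every $A$-module $M$ equals $i^*(\pi^* M)$, so $i^*$ is surjective on objects.

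Next I would verify that $i^*$ descends to the stable categories. The key point — and where the hypothesis that $B$ is projective as an $A$-module is used — is that $i^*$ sends projective $B$-modules to projective $A$-modules: a projective $B$-module is a summand of some $B^{(n)}$, and $i^*(B^{(n)}) \cong (i^* B)^{(n)}$ is projective because $i^* B$ is the $A$-module $B$ (with $A$ acting through $i$), which is projective by assumption. Since $A$ and $B$ are selfinjective, projective and injective modules coincide, so $i^*$ preserves injectives as well. A morphism factoring through a projective in $\mod B$ therefore maps to one factoring through a projective in $\mod A$, so $i^*$ induces a functor $\underline{\mod} B \to \underline{\mod} A$.

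To see this induced functor is triangulated, I would use that $i^*$ is exact and preserves injectives. Applying $i^*$ to a short exact sequence $0 \to M \to I \to \Omega_B^{-1} M \to 0$ with $I$ injective yields a short exact sequence whose middle term $i^* I$ is injective, which computes the cosyzygy of $i^* M$ and thereby gives a natural isomorphism $i^* \Omega_B^{-1} \cong \Omega_A^{-1} i^*$ in $\underline{\mod} A$; exactness simultaneously carries the triangles arising from short exact sequences to triangles. Hence $i^*$ is a functor of triangulated categories.

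Finally, for density the identity $i^*(\pi^* M) = M$ shows that every object of $\underline{\mod} A$ lies in the image of $i^*$, so the image is dense. Applying Proposition \ref{densesubcat} to $i^* \colon \underline{\mod} B \to \underline{\mod} A$ then yields $\dim(\underline{\mod} A) \le \dim(\underline{\mod} B)$, as desired. I expect the main obstacle to be the bookkeeping in confirming that $i^*$ is genuinely triangulated — specifically checking that the projectivity hypothesis on $B$ delivers the preservation of the projective-injective modules needed for $i^*$ to commute with the cosyzygy shift — rather than the final application of the proposition, which is immediate once the functor is in place.
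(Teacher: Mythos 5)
Your proposal is correct and follows exactly the paper's argument: restriction along $i$ induces a triangulated functor $\underline{\mod}\, B \to \underline{\mod}\, A$ (using $A$-projectivity of $B$ to preserve stable morphisms), restriction along $\pi$ shows every $A$-module is in its image, and Proposition \ref{densesubcat} concludes. The only difference is that you spell out the verification that the induced functor is triangulated, which the paper dismisses as clear.
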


\begin{proof}
Every $B$-module is an $A$-module via the map $i$. Moreover, if a
map in $\mod B$ factors through a projective $B$-module, then it
factors through a projective $A$-module since $B$ is $A$-projective.
Therefore the map $i$ induces a functor $F \colon \underline{\mod} B
\to \underline{\mod} A$, and this is clearly a functor of
triangulated categories. Now let $M$ be an object in
$\underline{\mod} A$. Then $M$ is a $B$-module via the map $\pi$,
hence every object in $\underline{\mod} A$ belongs to the image of
$F$. The result now follows from Proposition \ref{densesubcat}.
\end{proof}

\section{Quantum complete intersections}\label{quantumci}

Throughout the rest of this paper, let $k$ be a field and $n$ a
positive integer. Let $a_1, \dots, a_n$ be a sequence of integers
with $a_u \ge 2$, and for each pair $(i,j)$ of integers with $1 \le
i <j \le n$, let $q_{ij}$ be a nonzero element of $k$. Denote by
$\Lambda$ the algebra
$$\Lambda = k \langle X_1, \dots, X_n \rangle / (X_u^{a_u}, X_iX_j -
q_{ij}X_jX_i \text{ for } 1 \le i <j \le n ),$$ and by $x_i$ the
image of $X_i$ in $\Lambda$. This algebra is finite dimensional of
dimension $\prod a_i$, and a quantum complete intersection of
codimension $n$. Namely, it is the quotient of the quantum symmetric
algebra
$$k \langle X_1, \dots, X_n \rangle / (X_iX_j - q_{ij}X_jX_i \text{
for } 1 \le i <j \le n )$$ by the quantum regular sequence
$x_1^{a_1}, \dots, x_n^{a_n}$. Furthermore, these algebras are
Frobenius; the codimension two argument in the beginning of
\cite[Section 3]{Bergh} transfers to the general situation. In
particular, these algebras are selfinjective.

As mentioned in the introduction, the notion of quantum complete
intersections originates from work by Manin (cf.\ \cite{Manin}),
who introduced the concept of \emph{quantum symmetric algebras}.
These algebras were used by Avramov, Gasharov and Peeva in
\cite{Avramov} to study modules behaving homologically as modules
over commutative complete intersections. In particular, they
introduced \emph{quantum regular sequences} of endomorphisms of
modules, thus generalizing the classical notion of regular
sequences. In \cite{Benson} a rank variety theory was given for
quantum complete intersections satisfying certain conditions, and
in \cite{Bergh} the Hochschild cohomology and homology of these
algebras were studied.

We now prove the first of the main results. Given any subset $\{
i_1, \dots, i_t \}$ of $\{ 1, \dots, n \}$, let $\Lambda_{i_1,
\dots, i_t}$ denote the subalgebra of $\Lambda$ generated by
$x_{i_1}, \dots, x_{i_t}$. Thus $\Lambda_{i_1, \dots, i_t}$ is the
codimension $t$ quantum complete intersection we obtain when
``forgetting" the variables $X_i$ not in the sequence $X_{i_1},
\dots, X_{i_t}$.

\begin{theorem}\label{chain}
Let $\{ i_1, \dots, i_{n-1} \}$ be any subset of $\{ 1, \dots, n \}$
of size $n-1$. Then
$$\dim ( \underline{\mod} \Lambda_{i_1} ) \le \dim (
\underline{\mod} \Lambda_{i_1, i_2} ) \le \cdots \le \dim (
\underline{\mod} \Lambda_{i_1, \dots, i_{n-1}} ) \le \dim (
\underline{\mod} \Lambda ).$$
\end{theorem}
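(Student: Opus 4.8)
The plan is to obtain each inequality in the chain from a single application of Lemma~\ref{subalgebra}, by exhibiting, for every consecutive pair, the required split pair of algebra homomorphisms together with the projectivity hypothesis. Thus it suffices to prove the generic statement: if $S = \{ i_1, \dots, i_t \}$ and $S' = S \cup \{ i_{t+1} \}$ are two subsets of $\{ 1, \dots, n \}$ with $S \subset S'$, then the inclusion $\Lambda_S \hookrightarrow \Lambda_{S'}$ of the ``smaller'' quantum complete intersection into the larger one splits via an algebra surjection $\Lambda_{S'} \to \Lambda_S$, and $\Lambda_{S'}$ is projective as a $\Lambda_S$-module. Applying this to each step $\Lambda_{i_1, \dots, i_t} \subseteq \Lambda_{i_1, \dots, i_{t+1}}$ (and finally $\Lambda_{i_1, \dots, i_{n-1}} \subseteq \Lambda$) yields the whole chain. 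Here I use that each $\Lambda_S$ is itself a quantum complete intersection, hence finite dimensional and selfinjective, so the hypotheses of Lemma~\ref{subalgebra} are genuinely available.

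\medskip

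\noindent First I would fix the maps. The inclusion $i \colon \Lambda_S \to \Lambda_{S'}$ is the obvious one sending each $x_{i_u}$ ($u \le t$) to itself; this is a well-defined algebra homomorphism because the defining relations of $\Lambda_S$ (the nilpotency relations $x_{i_u}^{a_{i_u}}$ and the quantum commutation relations among the $x_{i_u}$) are exactly the restrictions of the relations of $\Lambda_{S'}$. For the retraction $\pi \colon \Lambda_{S'} \to \Lambda_S$ I would send $x_{i_u} \mapsto x_{i_u}$ for $u \le t$ and kill the extra generator, $x_{i_{t+1}} \mapsto 0$. To see this is a well-defined algebra homomorphism I must check it respects all defining relations of $\Lambda_{S'}$: the relations internal to $S$ are preserved by construction, the relation $x_{i_{t+1}}^{a_{i_{t+1}}} = 0$ maps to $0 = 0$, and each mixed quantum-commutation relation $x_{i_{t+1}} x_{i_u} - q \, x_{i_u} x_{i_{t+1}}$ maps to $0$ since it becomes a $k$-linear combination of monomials each containing the now-vanished factor $x_{i_{t+1}}$. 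By construction $\pi i = \mathrm{id}_{\Lambda_S}$.

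\medskip

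\noindent The main obstacle, and the only substantive point, is verifying that $\Lambda_{S'}$ is projective as a left $\Lambda_S$-module. The clean way is to exhibit an explicit free-module decomposition: using the PBW-type monomial basis of a quantum complete intersection, $\Lambda_{S'}$ has $k$-basis given by the ordered monomials $x_{i_1}^{e_1} \cdots x_{i_t}^{e_t} x_{i_{t+1}}^{e_{t+1}}$ with $0 \le e_u < a_{i_u}$. Grouping these by the exponent $e_{t+1}$ of the extra variable, I would argue that $\Lambda_{S'}$ decomposes as a left $\Lambda_S$-module into $a_{i_{t+1}}$ summands $\Lambda_S \cdot x_{i_{t+1}}^{\,e}$ for $0 \le e < a_{i_{t+1}}$, each of which is isomorphic to $\Lambda_S$ as a left $\Lambda_S$-module via right multiplication by $x_{i_{t+1}}^{\,e}$. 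The key computation here is that, because the quantum commutation relations let one move $x_{i_{t+1}}$ past each $x_{i_u}$ at the cost only of a nonzero scalar $q$, left multiplication by $x_{i_u}$ carries the $k$-span of $\{ m \, x_{i_{t+1}}^{\,e} \}$ into itself and the map $m \mapsto m \, x_{i_{t+1}}^{\,e}$ is a $\Lambda_S$-module isomorphism onto its image. Hence $\Lambda_{S'} \cong \Lambda_S^{\,a_{i_{t+1}}}$ is free, a fortiori projective, over $\Lambda_S$. With all three hypotheses of Lemma~\ref{subalgebra} verified for each consecutive pair, the chain of inequalities follows immediately.
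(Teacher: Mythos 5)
Your proposal is correct and follows essentially the same route as the paper: the inclusion split by the projection killing the extra generator, together with the free decomposition $\Lambda_{S'} \cong \bigoplus_{e=0}^{a_{i_{t+1}}-1} \Lambda_S \, x_{i_{t+1}}^{e}$, feeding each consecutive pair into Lemma~\ref{subalgebra}. You simply spell out the well-definedness of $\pi$ and the freeness computation in more detail than the paper does.
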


\begin{proof}
For any $t$ we have algebra homomorphisms
$$\Lambda_{i_1, \dots, i_{t-1}} \to \Lambda_{i_1, \dots, i_t} \to
\Lambda_{i_1, \dots, i_{n-1}}$$ whose composition is the identity.
Moreover, the middle term is free as a module over $\Lambda_{i_1,
\dots, i_{n-1}}$. Namely, it is isomorphic to
$\bigoplus_{j=0}^{a_{i_t}-1} ( \Lambda_{i_1, \dots, i_{n-1}}
)x_{i_t}^j$. The inequalities now follow from Lemma
\ref{subalgebra}.
\end{proof}

We end this section with a result giving an upper bound for the
representation dimension of a quantum complete intersection.

\begin{theorem}\label{upperbound}
The representation dimension of $\Lambda$ is at most $2n$.
\end{theorem}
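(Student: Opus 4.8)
The plan is to exhibit a generator-cogenerator $M$ of $\mod\Lambda$ with $\gldim\End_\Lambda(M)\le 2n$, using Auslander's characterization of the representation dimension \cite{Auslander1}: for a generator-cogenerator $M$ one has $\gldim\End_\Lambda(M)\le d+2$ as soon as every $\Lambda$-module $X$ admits an exact sequence
$$0\to M_d\to\cdots\to M_0\to X\to 0$$
with all $M_i\in\operatorname{add}M$ which stays exact after applying $\Hom_\Lambda(M,-)$. Thus it suffices to produce such an $M$ for which every module has a $\Hom_\Lambda(M,-)$-exact $\operatorname{add}M$-resolution of length $2n-2$. I would prove this by induction on the codimension, the statement for $\Lambda_{i_1,\dots,i_t}$ being that its modules admit such resolutions of length $2t-2$. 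The base case $t=1$ is immediate, since $\Lambda_{i_1}=k[x]/(x^{a_{i_1}})$ is a Nakayama algebra of finite representation type: taking $M$ to be the sum of all indecomposables gives resolutions of length $0$ and $\repdim\Lambda_{i_1}\le 2$.

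For the inductive step I would exploit the free extension already underlying Theorem \ref{chain}. Write $\Gamma=\Lambda_{i_1,\dots,i_{t-1}}$ and $\Delta=\Lambda_{i_1,\dots,i_t}$; then $\Delta$ is free as a $\Gamma$-module on $1,x_{i_t},\dots,x_{i_t}^{a_{i_t}-1}$ and carries the retraction $\Delta\to\Gamma$ killing $x_{i_t}$, with $\Gamma=\Delta/(x_{i_t})$. Freeness makes the induction functor $\Delta\otimes_\Gamma(-)$ exact and left adjoint to restriction, and $\Gamma\subseteq\Delta$ is a Frobenius extension, which governs the relative homological algebra along $x_{i_t}$. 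Starting from a generator-cogenerator $M$ of $\mod\Gamma$ realizing the inductive bound, I would build a generator-cogenerator $\widetilde M$ of $\mod\Delta$ whose core is the family of induced modules $\Delta\otimes_\Gamma M$ (together with the further summands forced by the analysis below). Since $\Gamma\in\operatorname{add}M$ and $\Delta=\Delta\otimes_\Gamma\Gamma$, the regular module lies in $\operatorname{add}\widetilde M$, so $\widetilde M$ generates, and it cogenerates as well because $\Delta$ is selfinjective.

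Given a $\Delta$-module $Y$, I would restrict it to $\Gamma$, take the $\Hom_\Gamma(M,-)$-exact $\operatorname{add}M$-resolution of length $2t-4$ furnished by the induction hypothesis, and induce it up; exactness of $\Delta\otimes_\Gamma(-)$ turns this into a $\Hom$-exact $\operatorname{add}\widetilde M$-resolution of the induced module $\Delta\otimes_\Gamma(\operatorname{res}Y)$. The counit $\Delta\otimes_\Gamma(\operatorname{res}Y)\to Y$ then bridges this induced module to $Y$ itself, and the two extra homological steps needed to cross that bridge are precisely what raise the resolution length from $2t-4$ to $2t-2$, giving $\repdim\Delta\le 2t$ and, at $t=n$, the desired bound.

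The main obstacle is the control of this bridge. The relative syzygies of $Y$ over $\Gamma$ are \emph{not} induced modules: because the new variable satisfies $x_{i_t}^{a_{i_t}}=0$ and so is not a relatively regular element, the relative resolution of $Y$ by induced modules is two-periodic, hence infinite, and a naive splicing would make the length grow with $a_{i_t}$. The heart of the argument is therefore to enlarge $M$ by the finitely many truncated modules responsible for this periodicity, so that the periodic tail is absorbed and every $\Delta$-module genuinely acquires a $\Hom_\Delta(\widetilde M,-)$-exact $\operatorname{add}\widetilde M$-resolution exceeding the $\Gamma$-resolution in length by exactly two, uniformly in $a_{i_t}$. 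Verifying the $\Hom$-exactness of the spliced sequence, and that two steps always suffice independently of $a_{i_t}$, is the delicate point; once it is in place the induction closes and yields $\repdim\Lambda\le 2n$.
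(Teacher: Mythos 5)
Your overall strategy---inducting on the codimension along the chain of subalgebras $\Lambda_{i_1,\dots,i_t}$ and invoking Auslander's criterion that $\gldim\End_\Lambda(M)\le d+2$ once every module admits a $\Hom_\Lambda(M,-)$-exact $\operatorname{add}M$-resolution of length $d$---is a reasonable skeleton, and your base case is fine. But the inductive step is not a proof: the entire content of the theorem is concentrated in the claim that the counit $\Delta\otimes_\Gamma(\operatorname{res}Y)\to Y$ can be ``bridged'' at a cost of exactly two $\Hom$-exact steps, uniformly in $a_{i_t}$, after enlarging $M$ by unspecified ``truncated modules''. You yourself flag this as the delicate point and leave it unverified. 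Concretely: the kernel of the counit is not an induced module, the relative $\Gamma$-split resolution of $Y$ by induced modules is $2$-periodic and hence infinite (as you note), and you give no construction of the extra summands of $\widetilde M$, no argument that the spliced complex is exact, and no argument that it stays exact after applying $\Hom_\Delta(\widetilde M,-)$. Without these the induction does not close, and it is not clear that the splicing can be made to cost only two steps rather than a number growing with $a_{i_t}$.

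The paper avoids this bridge entirely. It writes $\Lambda$ as a twisted tensor product $\Lambda_{1,\dots,n}\otimes^g_k\Lambda_{n+1}$, takes $M=M_1\otimes^g_kM_2$ where the $M_i$ are the generators supplied by induction, and strengthens the inductive hypothesis to include that all simple modules over $\End_{\gr}(M_i)$ are one-dimensional. Then $\End_{\gr}(M)\simeq\End_{\gr}(M_1)\otimes^g_k\End_{\gr}(M_2)$, every simple module over this ring is a twisted tensor product $S_1\otimes^g_kS_2$ of simples (this is exactly where one-dimensionality is used), and projective dimensions simply add: $\Pd S\le\Pd S_1+\Pd S_2\le 2n+2$. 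This K\"unneth-type additivity for the projective dimensions of the simples is the mechanism that replaces your ``two extra steps'' claim, and it is precisely the ingredient your outline is missing; salvaging your relative-homological approach would in effect require proving that additivity by hand, which amounts to redoing the paper's computation.
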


\begin{proof}
We prove by induction on $n$ that if $\Lambda$ is a quantum complete
intersection of codimension $n$, then there exists a graded
$\Lambda$-module $M$ (we view $\Lambda$ as a $\mathbb{Z}^n$-graded
algebra) such that the following hold:
\begin{enumerate}
\item[(i)] $\Lambda$ is a direct summand of $M$
\item[(ii)] the global dimension of $\End_{\gr \Lambda}(M)$ is at most
$2n$
\item[(iii)] all the simple $\End_{\gr \Lambda}(M)$-modules are
one-dimensional.
\end{enumerate}
The theorem will then follow from (i) and (ii).

If $n=1$, then $\Lambda$ is of the form $k[X] / (X^a)$. In this
case, define $M$ by $M = \bigoplus_{i=1}^a k[X]/(X^i)$. Then it is
well known that (i),(ii) and (iii) hold for $M$. Next let $n \ge 1$,
and suppose that the above claim holds for every quantum complete
intersection of codimension at most $n$. Let $\Lambda$ be a
codimension $n+1$ quantum complete intersection
$$\Lambda = k \langle X_1, \dots, X_{n+1} \rangle / (X_u^{a_u}, X_iX_j -
q_{ij}X_jX_i \text{ for } 1 \le i <j \le n+1 ),$$ and let
$\Lambda_{1, \dots, n}$ and $\Lambda_{n+1}$ be the subalgebras
generated by $x_1, \dots, x_n$ and $x_{n+1}$, respectively. The
algebra $\Lambda_{1, \dots, n}$ is $\mathbb{Z}^n$-graded, whereas
$\Lambda_{n+1}$ is $\mathbb{Z}$-graded.

Define a group homomorphism $g \colon \mathbb{Z}^n \times \mathbb{Z}
\to k \setminus \{ 0 \}$ by
$$\left ( ( z_1, \dots, z_n ), z \right ) \mapsto \prod_{i=1}^n
q_{i,n+1}^{-zz_i},$$ and use this to define a ``twisted" tensor
product algebra $\Lambda_{1, \dots, n} \otimes^g_k \Lambda_{n+1}$ as
follows. As a $k$-vector space $\Lambda_{1, \dots, n} \otimes^g_k
\Lambda_{n+1}$ is just the usual tensor product $\Lambda_{1, \dots,
n} \otimes_k \Lambda_{n+1}$, but the multiplication is given by
$$( \lambda_1 \otimes \gamma_1 )( \lambda_2 \otimes \gamma_2 )
\stackrel{\text{def}}{=} g( | \lambda_2 |, | \gamma_1 | ) \lambda_1
\lambda_2 \otimes \gamma_1 \gamma_2,$$ where $\lambda_i$ and
$\gamma_i$ are homogeneous elements of $\Lambda_{1, \dots, n}$ and
$\Lambda_{n+1}$, respectively. With this ring structure we see that
the algebra $\Lambda_{1, \dots, n} \otimes^g_k \Lambda_{n+1}$ is
isomorphic to $\Lambda$.

By induction, there exist graded modules $M_1 \in \mod \Lambda_{1,
\dots, n}$ and $M_2 \in \mod \Lambda_{n+1}$ satisfying (i),(ii) and
(iii) over $\Lambda_{1, \dots, n}$ and $\Lambda_{n+1}$,
respectively. Consider the graded module $M_1 \otimes_k^g M_2$ over
$\Lambda_{1, \dots, n} \otimes^g_k \Lambda_{n+1}$, where the scalar
action is defined in the natural way. It is not hard to see that
$\Lambda_{1, \dots, n} \otimes^g_k \Lambda_{n+1}$ is a direct
summand of $M_1 \otimes_k^g M_2$, and so this module satisfies (i).
Now let $\Gamma$ be its endomorphism ring over $\Lambda_{1, \dots,
n} \otimes^g_k \Lambda_{n+1}$. Then $\Gamma \simeq \Gamma_1
\otimes_k^g \Gamma_2$, where $\Gamma_1 = \End_{\gr \Lambda_{1,
\dots, n}} (M_1)$ and $\Gamma_2 = \End_{\gr \Lambda_{n+1}}(M_2)$. If
$S$ is a simple $\Gamma$-module, then it is a quotient of a module
of the form $S_1 \otimes_k^g S_2$, where $S_i$ is a simple
$\Gamma_i$-module. Since $S_i$ is one-dimensional, the module $S$
must be isomorphic to $S_1 \otimes_k^g S_2$, giving
$$\Pd_{\Lambda_{1, \dots, n} \otimes^g_k \Lambda_{n+1}} S \le
\Pd_{\Lambda_{1, \dots, n}} S_1 + \Pd_{\Lambda_{n+1}} S_2 \le 2n +
2.$$ In particular, we see that (ii) and (iii) hold for the module
$M_1 \otimes_k^g M_2$. This completes the proof.
\end{proof}

\section{Homogeneous quantum complete intersections}

We now turn our attention to ``homogeneous" quantum complete
intersections of codimension $n$. Throughout the rest of this paper,
fix an integer $a \ge 2$ and a primitive $a$th root of unity $q \in
k$. We denote by $\Lambda_n^a$ the quantum complete intersection
$$\Lambda_n^a = \langle X_1, \dots, X_n \rangle / (X_j^a, X_iX_j -
qX_jX_i \text{ for } 1 \le i <j \le n ),$$ that is, all the defining
exponents and commutators are equal to $a$ and $q$, respectively.

Our aim is to show that the representation dimension of
$\Lambda_n^a$ is at least $n+1$. To do this, we show that given any
object $M \in \underline{\mod} \Lambda^a_n$, there are objects $\{
N_i \in \underline{\mod} \Lambda^a_n \}_{i=1}^n$ and morphisms $\{
N_i \xrightarrow{f_i} N_{i+1} \}_{i=1}^{n-1}$ satisfying the
following: for each $i$ the map
$$\underline{\Hom}_{\Lambda^a_n}(-,N_i) \xrightarrow{(f_i)_*}
\underline{\Hom}_{\Lambda^a_n}(-,N_{i+1})$$ vanishes on $\langle M
\rangle$, whereas the composition $(f_{n-1})_* \circ \cdots \circ
(f_1)_*$ does not vanish on $\underline{\mod} \Lambda^a_n$. We may
then conclude from \cite[Lemma 4.11]{Rouquier1} that $\dim (
\underline{\mod} \Lambda^a_n ) \ge n-1$, and then Proposition
\ref{lowerbound} gives $\repdim \Lambda^a_n \ge n+1$.

The first step is the following lemma on the behavior of a linear
combination of the generators. Whenever we have a tuple $\alpha = (
\alpha_1, \dots, \alpha_n ) \in k^n$, we denote the corresponding
element $\alpha_1 x_1 + \cdots + \alpha_n x_n \in \Lambda^a_n$ by
$\sigma_{\alpha}$.

\begin{lemma}\label{lincomb}
For any $n$-tuple $\alpha$ in $k^n$, the following hold.
\begin{enumerate}
\item[(i)] $\sigma_{\alpha}^a =0$.
\item[(ii)] $\sigma_{\alpha}^{a-1}x_i +
\sigma_{\alpha}^{a-2}x_i \sigma_{\alpha} + \cdots + \sigma_{\alpha}
x_i \sigma_{\alpha}^{a-2} + x_i \sigma_{\alpha}^{a-1} =0$.
\end{enumerate}
\end{lemma}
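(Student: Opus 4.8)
The plan is to establish (i) first and then to derive (ii) from it by a perturbation argument. For (i), the key observation is that the generators \(q\)-commute with a single fixed parameter \(q\), so the quantum binomial theorem applies. Recall that in any associative \(k\)-algebra, if two elements \(u,v\) satisfy \(uv = q vu\), then \((u+v)^m = \sum_{k=0}^{m} \binom{m}{k}_q u^k v^{m-k}\), where \(\binom{m}{k}_q\) denotes the Gaussian binomial coefficient. Since \(q\) is a primitive \(a\)th root of unity, the factor \([a]_q = 1 + q + \cdots + q^{a-1}\) vanishes, and a standard computation shows that \(\binom{a}{k}_q = 0\) for \(1 \le k \le a-1\), while \(\binom{a}{0}_q = \binom{a}{a}_q = 1\).

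I would prove (i) by induction on \(n\). For \(n=1\) we have \(\sigma_{\alpha} = \alpha_1 x_1\), so \(\sigma_{\alpha}^a = \alpha_1^a x_1^a = 0\). For the inductive step, write \(\sigma_{\alpha} = \alpha_1 x_1 + \tau\) with \(\tau = \alpha_2 x_2 + \cdots + \alpha_n x_n\). Since \(x_1 x_j = q x_j x_1\) for every \(j \ge 2\), the elements \(u = \alpha_1 x_1\) and \(v = \tau\) satisfy \(uv = q vu\). Applying the quantum binomial theorem and the vanishing of the middle coefficients yields \(\sigma_{\alpha}^a = u^a + v^a = \alpha_1^a x_1^a + \tau^a = \tau^a\). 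As \(\tau\) is a linear combination of the generators of the codimension \(n-1\) homogeneous quantum complete intersection on \(x_2, \dots, x_n\), the induction hypothesis gives \(\tau^a = 0\).

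For (ii), the idea is to differentiate (i) with respect to the coefficient \(\alpha_i\). I would introduce a central indeterminate \(t\) and pass to \(\Lambda^a_n \otimes_k k[t]\), in which \(\sigma_{\alpha} + t x_i = \sum_j (\alpha_j + t \delta_{ij}) x_j\) is again a linear combination of the generators, now with coefficients in the commutative ring \(k[t]\). The proof of (i) uses only that the coefficients are central and commute with the \(x_j\), so it applies verbatim to give \((\sigma_{\alpha} + t x_i)^a = 0\). Expanding the left-hand side as a polynomial in \(t\), the constant term is \(\sigma_{\alpha}^a = 0\), and collecting the terms linear in \(t\) produces exactly \(\sum_{k=0}^{a-1} \sigma_{\alpha}^{a-1-k} x_i \sigma_{\alpha}^k\). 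Since \(\Lambda^a_n \otimes_k k[t]\) is free over \(k[t]\), every coefficient of this polynomial must vanish; in particular the coefficient of \(t\) is zero, which is precisely (ii).

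The only genuinely delicate point is the vanishing \(\binom{a}{k}_q = 0\) for \(1 \le k \le a-1\) at a primitive \(a\)th root of unity, which drives both the collapse of the binomial expansion in (i) and hence the whole argument; everything else is bookkeeping. I would emphasize that working over \(k[t]\), rather than specializing \(t\) to scalars and comparing, is what keeps the argument for (ii) valid even when the field \(k\) is finite, since coefficient comparison cannot then be justified by evaluation alone.
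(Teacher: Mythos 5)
Your proof is correct, and it differs from the paper's in both parts. For (i) the paper simply cites \cite[Lemma 2.3]{Benson}, whereas you give a self-contained induction using the $q$-binomial theorem and the vanishing of the Gaussian coefficients $\binom{a}{j}_q$ for $0<j<a$ at a primitive $a$th root of unity; this is essentially the content of the cited lemma, so nothing is lost (just be careful with the convention: with $uv=qvu$ the expansion involves $\binom{a}{j}_{q^{-1}}$ rather than $\binom{a}{j}_q$, but these vanish for the same reason since $q^{-1}$ is again a primitive $a$th root of unity, so the collapse to $u^a+v^a$ is unaffected). For (ii) the paper also "perturbs" (i), but it does so by specializing to scalars $\epsilon\in k$: it expands $(\sigma_\alpha+\epsilon x_i)^a=0$, divides by $\epsilon$, and invokes a Zariski-closure argument to pass from all nonzero $\epsilon$ to $\epsilon=0$, which forces the hypothesis that $k$ be infinite and a subsequent scalar-extension step to handle finite fields. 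Your version instead adjoins a central indeterminate $t$, observes that the proof of (i) is valid over the coefficient ring $k[t]$, and extracts the coefficient of $t$ in $(\sigma_\alpha+tx_i)^a=0$ using the freeness of $\Lambda^a_n\otimes_k k[t]$ over $k[t]$. This is cleaner: it proves (ii) uniformly for all fields in one stroke and eliminates both the topological argument and the case distinction on $|k|$, at the modest cost of having to note that the argument for (i) works with coefficients in an arbitrary commutative $k$-algebra.
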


\begin{proof}
The first part of the lemma is just \cite[Lemma 2.3]{Benson}. To
prove the second part, assume that $k$ is infinite, and let
$\epsilon$ be any nonzero element of $k$. Then from (i) the element
$( \sigma_{\alpha} + \epsilon x_i )^a$ is zero, and expanding we get
$$0 = \sigma_{\alpha}^a + \epsilon \left ( \sum_{j=0}^{a-1} \sigma_{\alpha}^j
x_i \sigma_{\alpha}^{a-1-j} \right ) + \epsilon^2 z,$$ where $z$ is
some element in $\Lambda_n^a$ depending on $\epsilon$. Therefore the
set of all elements $\epsilon$ in $k$ for which the equality
$$0 = \epsilon z + \sum_{j=0}^{a-1} \sigma_{\alpha}^j x_i
\sigma_{\alpha}^{a-1-j}$$ holds contains all the nonzero elements.
However, this set is closed in the Zariski topology, and so since
$k$ is infinite the zero element must also belong to it. This proves
the second part of the lemma in the case when $k$ is infinite. If
$k$ is finite, let $K / k$ be an infinite field extension. Then $K
\otimes_k \Lambda_n^a$ is the quantum complete intersection we
obtain when allowing the scalars to be elements of $K$, with the
same defining relations as in $\Lambda_n^a$. Since (ii) holds in $K
\otimes_k \Lambda_n^a$ by the above, it also holds in $\Lambda_n^a$.
\end{proof}

In order to prove the main result, we also need the following two
lemmas. These are versions of \cite[Lemma 9]{Oppermann1} and
\cite[Proposition 11]{Oppermann1}, respectively.

\begin{lemma}\label{openset}
Suppose $k$ is infinite, and let $M \in \mod \Lambda^a_n$ be a
module. Then there exists a non-empty open subset $U_M \subseteq
k^n$ such that for any $\alpha \in U_M$ and any $m \in M$, the
following implications hold:
\begin{eqnarray*}
\sigma_{\alpha} m =0 & \Rightarrow & \sigma_{\beta} m \in
\sigma_{\alpha} M \text{ for all } \beta \in k^n \\
\sigma_{\alpha}^{a-1} m =0 & \Rightarrow & \left ( \sum_{i=0}^{a-2}
\sigma_{\alpha}^i \sigma_{\beta} \sigma_{\alpha}^{a-2-i} \right ) m
\in \sigma_{\alpha}^{a-1} M \text{ for all } \beta \in k^n.
\end{eqnarray*}
\end{lemma}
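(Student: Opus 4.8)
The plan is to prove Lemma~\ref{openset} by reducing both implications to a statement about the non-vanishing of certain polynomial maps in the coordinates $\alpha = (\alpha_1, \dots, \alpha_n)$, so that the desired open set $U_M$ arises as the complement of a closed (Zariski) vanishing locus. The key observation is that for a fixed module $M$, multiplication by $\sigma_\alpha$ defines a $k$-linear endomorphism of $M$ whose matrix entries (in a fixed $k$-basis of $M$) are \emph{linear} in the $\alpha_i$; more generally, multiplication by $\sigma_\alpha^j$ and by the operators appearing in the two implications have matrix entries that are polynomial in the $\alpha_i$. I would first set up this framework explicitly: choose a $k$-basis of $M$, let $A(\alpha)$ denote the matrix of left-multiplication by $\sigma_\alpha$, and record that $A(\alpha)^a = 0$ identically by Lemma~\ref{lincomb}(i), while the operator $\sum_{i=0}^{a-2}\sigma_\alpha^i\sigma_\beta\sigma_\alpha^{a-2-i}$ is, after multiplying by $A(\alpha)$ on the appropriate side, controlled by Lemma~\ref{lincomb}(ii).

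Next I would translate each implication into an inclusion of kernels and images. For the first implication, the hypothesis $\sigma_\alpha m = 0$ says $m \in \Ker A(\alpha)$, and the conclusion $\sigma_\beta m \in \sigma_\alpha M$ says $\sigma_\beta(\Ker A(\alpha)) \subseteq \Im A(\alpha)$ for all $\beta$. Since the $\sigma_\beta$ for varying $\beta$ are spanned by the $x_1, \dots, x_n$, it suffices to verify $x_i(\Ker A(\alpha)) \subseteq \Im A(\alpha)$ for each $i$. The inclusion $\Im A(\alpha) \subseteq \Ker A(\alpha)$ (which holds because $A(\alpha)^a = 0$, so $A(\alpha)$ maps $\Im A(\alpha)$ into $\Ker A(\alpha)^{a-1} \supseteq \ldots$)---more precisely the relevant containments---lets me phrase the obstruction as a rank/dimension condition on matrices whose entries are polynomial in $\alpha$. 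The locus where these dimensions are \emph{generic} (maximal rank of $A(\alpha)$, equivalently minimal kernel) is open, and I would argue that on this generic locus the required inclusion holds, using Lemma~\ref{lincomb}(ii) to produce the witness: the relation in Lemma~\ref{lincomb}(ii) expresses $x_i \sigma_\alpha^{a-1}$ as a sum of terms each beginning with $\sigma_\alpha$, which is exactly what lets one push $x_i m$ into $\Im A(\alpha)$ once $m$ lies in the appropriate kernel.

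The second implication is handled in parallel, with $\sigma_\alpha$ replaced by $\sigma_\alpha^{a-1}$: the hypothesis $\sigma_\alpha^{a-1} m = 0$ places $m$ in $\Ker A(\alpha)^{a-1}$, and the conclusion asks that the ``derivative" operator $\sum_{i=0}^{a-2}\sigma_\alpha^i\sigma_\beta\sigma_\alpha^{a-2-i}$ send $m$ into $\Im A(\alpha)^{a-1}$. Here the decisive input is again Lemma~\ref{lincomb}(ii), which I expect furnishes precisely the algebraic identity needed to rewrite the derivative operator applied to an element of $\Ker A(\alpha)^{a-1}$ as an element of $\Im A(\alpha)^{a-1}$; indeed part (ii) is the infinitesimal version of part (i) obtained by the same $\sigma_\alpha \mapsto \sigma_\alpha + \epsilon x_i$ perturbation, and this matches the structure of the second implication. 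I would define $U_M$ as the intersection of the two generic-rank open loci for $A(\alpha)$ and $A(\alpha)^{a-1}$, which is open and, since $k$ is infinite, non-empty (a non-trivial Zariski-open subset of $k^n$ is dense, hence non-empty).

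The main obstacle, I expect, is the genericity argument itself: one must show that the kernel and image dimensions of $A(\alpha)$ and its powers are constant (hence the inclusions are ``uniform") on a non-empty open set, and that on this set the pointwise algebraic identities of Lemma~\ref{lincomb} actually force the module-theoretic inclusions rather than merely holding for individual elements. Concretely, upper-semicontinuity of rank gives that $\{\alpha : \operatorname{rank} A(\alpha) \ge r\}$ is open for each $r$, so taking $r$ maximal produces the open set; the subtlety is verifying that on this locus the witness supplied by Lemma~\ref{lincomb}(ii) lands in the image for \emph{every} element of the kernel, which is where the matched parametrization between the kernel condition and the relation must be checked carefully. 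This is the technical heart, and I would model it closely on the proof of \cite[Lemma 9]{Oppermann1}, adapting the commutative-ring manipulations there to the present $q$-commuting setting via Lemma~\ref{lincomb}.
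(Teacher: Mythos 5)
Your scaffolding matches the paper's: $U_M$ is the intersection of the two loci where the matrices of $\sigma_\alpha$ and of $\sigma_\alpha^{a-1}$ acting on $M$ have maximal rank, openness comes from upper-semicontinuity of rank (the entries are polynomial in $\alpha$), and non-emptiness from $k$ being infinite. The gap is precisely in the step you yourself flag as the technical heart: why maximal rank forces $\sigma_\beta(\Ker\sigma_\alpha) \subseteq \sigma_\alpha M$. The mechanism you propose --- using the identity of Lemma \ref{lincomb}(ii), which rewrites $x_i\sigma_\alpha^{a-1}$ as a sum of terms each beginning with $\sigma_\alpha$, to ``push $x_i m$ into $\Im A(\alpha)$ once $m$ lies in the appropriate kernel'' --- does not work. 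To invoke that identity you would need $m \in \sigma_\alpha^{a-1}M$, whereas the hypothesis only gives $m \in \Ker\sigma_\alpha$, and $\Ker\sigma_\alpha = \Im\sigma_\alpha^{a-1}$ fails for general $M$ even at generic $\alpha$ (for the simple module $\sigma_\alpha$ acts as zero, so $\Ker\sigma_\alpha = M$ while $\Im\sigma_\alpha^{a-1}=0$). More fundamentally, no identity valid in $\Lambda^a_n$ for all $\alpha$ can prove the implication, because the implication is genuinely false for non-generic $\alpha$ (e.g.\ $M=\Lambda^2_2/(x_1)$ with $\alpha=(1,0)$, $\beta=(0,1)$): genericity has to enter the argument in an essential way, not just through the choice of the open set.

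The idea missing from your proposal, and the one the paper uses, is a degeneration argument in the perturbation parameter. Fix $\alpha$ with $\sigma_\alpha$ of maximal rank $t$ and choose a basis $\{m_1,\dots,m_s,w_1,\dots,w_t\}$ of $M$ with $\sigma_\alpha m_i=0$ and $\{\sigma_\alpha w_1,\dots,\sigma_\alpha w_t\}$ independent. For any $\beta$ and any $\epsilon\neq 0$ the operator $\sigma_{\alpha+\epsilon\beta}$ also has rank at most $t$, so the $t+1$ vectors $\sigma_{\alpha+\epsilon\beta}m_i,\sigma_{\alpha+\epsilon\beta}w_1,\dots,\sigma_{\alpha+\epsilon\beta}w_t$ are dependent; since $\sigma_{\alpha+\epsilon\beta}m_i=\epsilon\sigma_\beta m_i$, the first vector may be replaced by $\sigma_\beta m_i$. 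Linear dependence is a Zariski-closed condition in $\epsilon$ holding for all $\epsilon\neq 0$, hence (as $k$ is infinite) also at $\epsilon=0$, which gives $\sigma_\beta m_i\in\sigma_\alpha M$. The second implication is the same argument for $\sigma_{\alpha+\epsilon\beta}^{a-1}$, whose first-order term in $\epsilon$ is exactly the operator $\sum_{i}\sigma_\alpha^{i}\sigma_\beta\sigma_\alpha^{a-2-i}$; that is where Lemma \ref{lincomb}(ii) actually enters (besides making the maps in Lemma \ref{factoring} well defined), not as a witness for the first implication. So your open set is the right one, but the proof that it has the stated property requires this specialization-at-$\epsilon=0$ step, which your proposal does not supply.
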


\begin{proof}
Let $U_1$ be the set of all elements $\alpha \in k^n$ such that the
matrix representing the linear map $M \xrightarrow{\sigma_{\alpha}}
M$ has maximal rank. Then $U_1$ is non-empty and open, by an
argument similar to that of the proof of \cite[Lemma 9]{Oppermann1}.
For an element $\alpha \in U_1$, choose a basis
$$\{ m_1, \dots, m_s, w_1, \dots, w_t \}$$
of $M$ such that $\sigma_{\alpha} m_i =0$ and the set $\{
\sigma_{\alpha} w_1, \dots, \sigma_{\alpha} w_t \}$ is linearly
independent. Then for any nonzero $\epsilon \in k$ and any $\beta
\in k^n$, the set
$$\{ \sigma_{( \alpha + \epsilon \beta )} m_i, \sigma_{( \alpha + \epsilon
\beta )} w_1, \dots, \sigma_{( \alpha + \epsilon \beta )} w_t \}$$
is linearly dependent by the choice of $\alpha$. Since
$\sigma_{\alpha}m_i =0$, the set
$$\{ \sigma_{\beta} m_i, \sigma_{( \alpha + \epsilon
\beta )} w_1, \dots, \sigma_{( \alpha + \epsilon \beta )} w_t \}$$
is also linearly dependent for any nonzero $\epsilon \in k$ and any
$\beta \in k^n$. However, the set of all $\epsilon \in k$ such that
this set is linearly dependent is closed, and since it contains all
the nonzero elements it must be $k$ itself. Consequently, for any
element $\beta \in k^n$ the set
$$\{ \sigma_{\beta} m_i, \sigma_{\alpha} w_1, \dots, \sigma_{\alpha} w_t
\}$$ is linearly dependent, i.e.\  $\sigma_{\beta} m_i \in
\sigma_{\alpha} M$. Therefore, given any elements $\alpha \in U_1$
and $m \in M$, the first implication in the lemma holds.

Next define $U_2$ to be the set of all elements $\alpha \in k^n$
such that the matrix representing the linear map $M
\xrightarrow{\sigma_{\alpha}^{a-1}} M$ has maximal rank. Then $U_2$
is non-empty and open, and an argument similar to that for $U_1$,
using Lemma \ref{lincomb}(ii), shows that given any elements $\alpha
\in U_2$ and $m \in M$, the second implication in the lemma holds.
Now since $U_1$ and $U_2$ are non-empty open sets in the Zariski
topology, their intersection $U_M = U_1 \cap U_2$ is also non-empty
and open, and this is a set having the properties we are seeking.
\end{proof}

\begin{lemma}\label{factoring}
Suppose $k$ is infinite, and let $M$ be a $\Lambda^a_n$-module. Then
there exists a non-empty open subset $U \subseteq k^n$ such that for
any $\alpha \in U$ and any $1 \le p \le n$, the following hold.
\begin{enumerate}
\item[(i)] For every $j \in \mathbb{Z}$, any composition
$$\Lambda^a_n / ( \sigma_{\alpha} ) \xrightarrow{\cdot \sum_{i=0}^{a-2}
\sigma_{\alpha}^i x_p \sigma_{\alpha}^{a-2-i}} \Lambda^a_n / (
\sigma_{\alpha}^{a-1} ) \to \Omega_{\Lambda^a_n}^j(M)$$ of left
$\Lambda^a_n$-homomorphisms is zero in $\underline{\mod}
\Lambda^a_n$.
\item [(ii)] For every $j \in \mathbb{Z}$, any composition
$$\Lambda^a_n / ( \sigma_{\alpha}^{a-1} ) \xrightarrow{\cdot
x_p} \Lambda^a_n / ( \sigma_{\alpha} ) \to
\Omega_{\Lambda^a_n}^j(M)$$ of left $\Lambda$-homomorphisms is zero
in $\underline{\mod} \Lambda^a_n$.
\end{enumerate}
\end{lemma}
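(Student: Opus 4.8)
The plan is to produce a single open set $U$ that works simultaneously for every $j \in \mathbb{Z}$, and the whole difficulty lies in this uniformity: applying Lemma~\ref{openset} to each cosyzygy $\Omega_{\Lambda^a_n}^j(M)$ separately would force us to intersect infinitely many open sets. The way around this is to use selfinjectivity of $\Lambda^a_n$ to transport every composition landing in $\Omega_{\Lambda^a_n}^j(M)$ back to one landing in $M$ itself, reducing the whole family of conditions to the single case $j=0$. Write $L = \Lambda^a_n/(\sigma_{\alpha})$ and $L' = \Lambda^a_n/(\sigma_{\alpha}^{a-1})$, and set $r = \sum_{i=0}^{a-2}\sigma_{\alpha}^i x_p \sigma_{\alpha}^{a-2-i}$, so that the two structural maps are the right multiplications $L \xrightarrow{\cdot r} L'$ of part (i) and $L' \xrightarrow{\cdot x_p} L$ of part (ii). Lemma~\ref{lincomb}(ii) shows both are well defined and yields the two identities $\sigma_{\alpha} r = - x_p \sigma_{\alpha}^{a-1}$ and $\sigma_{\alpha}^{a-1} x_p = - r \sigma_{\alpha}$.

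I would first record that for $\alpha$ in a non-empty open subset (for instance whenever $\sigma_{\alpha}^{a-1}$ has maximal rank on $\Lambda^a_n$, which includes the condition for $U_2$ in Lemma~\ref{openset} applied to the module $\Lambda^a_n$) the algebra $\Lambda^a_n$ is free as a module over the subalgebra $k[\sigma_{\alpha}]/(\sigma_{\alpha}^a)$; this is standard for these algebras (cf.\ \cite{Benson}) and can also be checked by a basis computation. Freeness forces the right annihilator of $\sigma_{\alpha}$ to be $\Lambda^a_n \sigma_{\alpha}^{a-1}$, and that of $\sigma_{\alpha}^{a-1}$ to be $\Lambda^a_n \sigma_{\alpha}$, so that
$$\cdots \to \Lambda^a_n \xrightarrow{\cdot \sigma_{\alpha}^{a-1}} \Lambda^a_n \xrightarrow{\cdot \sigma_{\alpha}} \Lambda^a_n \to L \to 0$$
is a minimal projective resolution. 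Reading off syzygies gives $\Omega_{\Lambda^a_n} L \cong L'$ and $\Omega_{\Lambda^a_n} L' \cong L$, so $L$ and $L'$ are $\Omega$-periodic of period two.

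The technical heart is then to compute the syzygies of the two structural maps. Lifting $\cdot r$ and $\cdot x_p$ along the resolutions above and using the two identities displayed in the first paragraph, one finds $\Omega_{\Lambda^a_n}(\cdot r) = -(\cdot x_p)$ and $\Omega_{\Lambda^a_n}(\cdot x_p) = -(\cdot r)$ in $\underline{\mod} \Lambda^a_n$. Consequently, up to stable isomorphism and an irrelevant sign, applying $\Omega_{\Lambda^a_n}^{-j}$ to either structural map again produces one of the two structural maps, while $\Omega_{\Lambda^a_n}^{-j} L, \Omega_{\Lambda^a_n}^{-j} L' \in \{ L, L' \}$. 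Since $\Lambda^a_n$ is selfinjective, $\Omega_{\Lambda^a_n}^{-j}$ is an autoequivalence of $\underline{\mod} \Lambda^a_n$, hence faithful; applying it to a composition $L \xrightarrow{\cdot r} L' \xrightarrow{h} \Omega_{\Lambda^a_n}^j(M)$ (respectively $L' \xrightarrow{\cdot x_p} L \xrightarrow{h} \Omega_{\Lambda^a_n}^j(M)$) turns it into a composition of a structural map with an arbitrary map into $M$, and the original is zero in $\underline{\mod} \Lambda^a_n$ if and only if the transported one is. Thus it suffices to treat $j=0$, and I would take $U = U_M$ (intersected with the open set from the second paragraph).

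Finally, for target $M$ and $\alpha \in U_M$, a homomorphism $h \colon L' \to M$ corresponds to an element $m \in M$ with $\sigma_{\alpha}^{a-1} m = 0$, and $h \circ (\cdot r)$ corresponds to $rm$. Taking $\beta$ to be the $p$th standard vector, so that $\sigma_{\beta} = x_p$, the second implication of Lemma~\ref{openset} gives $rm = \sigma_{\alpha}^{a-1} m'$ for some $m' \in M$; then the induced map $L \to M$ sending $\bar 1 \mapsto \sigma_{\alpha}^{a-1} m'$ factors as $L \to \Lambda^a_n \to M$, $\bar 1 \mapsto \sigma_{\alpha}^{a-1} \mapsto \sigma_{\alpha}^{a-1} m'$ (well defined since $\sigma_{\alpha}^a = 0$ by Lemma~\ref{lincomb}(i)), hence is zero in $\underline{\mod} \Lambda^a_n$. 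Symmetrically, a map $L \to M$ corresponds to $m$ with $\sigma_{\alpha} m = 0$, the composite $h \circ (\cdot x_p)$ corresponds to $x_p m$, the first implication of Lemma~\ref{openset} gives $x_p m \in \sigma_{\alpha} M$, and the resulting map $L' \to M$ factors through $\Lambda^a_n$ in the same way. The genuinely delicate step is the syzygy computation of the structural maps in the third paragraph, as this is exactly what converts the infinite family of conditions over all $j$ into the single condition $\alpha \in U_M$; the remainder is bookkeeping with Lemma~\ref{openset} and the explicit factorizations through $\Lambda^a_n$.
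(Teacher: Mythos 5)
Your proof is correct and follows essentially the same route as the paper: the same commutative-diagram computation showing that $\Omega$ interchanges the two structural maps (hence their $2$-periodicity), the same reduction of arbitrary $j$ to small $j$ via the autoequivalence $\Omega^{-1}$ of the stable category, and the same explicit factorization through the projective $\Lambda^a_n$ using Lemma \ref{openset}. The only (harmless) variation is that by allowing odd shifts, which swap $\cdot r$ and $\cdot x_p$, you reduce every case to target $M$ and can take $U = U_M$, whereas the paper uses only even shifts and therefore takes $U = U_M \cap U_{\Omega_{\Lambda}(M)}$, treating both targets $M$ and $\Omega_{\Lambda}(M)$ directly.
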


\begin{proof}
\sloppy For simplicity, we denote our algebra $\Lambda^a_n$ by just
$\Lambda$. First note that the maps $\Lambda / ( \sigma_{\alpha} )
\xrightarrow{\cdot \sum_{i=0}^{a-2} \sigma_{\alpha}^i x_p
\sigma_{\alpha}^{a-2-i}} \Lambda / ( \sigma_{\alpha}^{a-1} )$ and
$\Lambda / ( \sigma_{\alpha}^{a-1} ) \xrightarrow{\cdot x_p} \Lambda
/ ( \sigma_{\alpha} )$ of left modules are well defined by Lemma
\ref{lincomb}(ii). Choose two non-empty open subsets $U_M$ and
$U_{\Omega_{\Lambda}(M)}$ of $k^n$ satisfying Lemma \ref{openset},
and consider their intersection $U = U_M \cap
U_{\Omega_{\Lambda}(M)}$. Then $U$ is also non-empty and open. Take
any $\alpha \in U$ and let $\Lambda / ( \sigma_{\alpha}^{a-1} )
\xrightarrow{g} N$ be a homomorphism of left $\Lambda$-modules,
where $N$ is either $M$ or $\Omega_{\Lambda}(M)$. Furthermore, fix
any $1 \le p \le n$, and let $m = g( 1 + ( \sigma_{\alpha}^{a-1} )
)$. Then $\sigma_{\alpha}^{a-1} m =0$, and so by Lemma \ref{openset}
there exists an element $m' \in N$ with the property that $\left (
\sum_{i=0}^{a-2} \sigma_{\alpha}^i x_p \sigma_{\alpha}^{a-2-i}
\right ) m = \sigma_{\alpha}^{a-1} m'$. This gives the factorization
$$\xymatrix@C=70pt{
\Lambda / ( \sigma_{\alpha} ) \ar[r]^{\cdot \sum_{i=0}^{a-2}
\sigma_{\alpha}^i x_p \sigma_{\alpha}^{a-2-i}} \ar[dr]_{1 \mapsto
\sigma_{\alpha}^{a-1}} & \Lambda / ( \sigma_{\alpha}^{a-1} )
\ar[r]^g & N \\
& \Lambda \ar[ur]_{1 \mapsto m'} }$$ proving (i) in the case when
$j=0,1$. A similar argument shows that (ii) holds for $j=0,1$.

Consider the two diagrams
$$\xymatrix@C=50pt{
0 \ar[r] & \Lambda / ( \sigma_{\alpha} ) \ar[d]^{\cdot
\sum_{i=0}^{a-2} \sigma_{\alpha}^i x_p \sigma_{\alpha}^{a-2-i}}
\ar[r]^{\cdot \sigma_{\alpha}^{a-1}} & \Lambda \ar[d]^{\cdot x_p}
\ar[r] & \Lambda / ( \sigma_{\alpha}^{a-1} ) \ar[d]^{\cdot x_p}
\ar[r] & 0 \\
0 \ar[r] & \Lambda / ( \sigma_{\alpha}^{a-1} ) \ar[r]^{\cdot ( -
\sigma_{\alpha})} & \Lambda \ar[r] & \Lambda / ( \sigma_{\alpha} )
\ar[r] & 0 \\
0 \ar[r] & \Lambda / ( \sigma_{\alpha}^{a-1} ) \ar[d]^{\cdot x_p}
\ar[r]^{\cdot (- \sigma_{\alpha})} & \Lambda \ar[d]^{\cdot
\sum_{i=0}^{a-2} \sigma_{\alpha}^i x_p \sigma_{\alpha}^{a-2-i}}
\ar[r] & \Lambda / ( \sigma_{\alpha} ) \ar[d]^{\cdot
\sum_{i=0}^{a-2} \sigma_{\alpha}^i x_p \sigma_{\alpha}^{a-2-i}}
\ar[r] & 0 \\
0 \ar[r] & \Lambda / ( \sigma_{\alpha} ) \ar[r]^{\cdot
\sigma_{\alpha}^{a-1}} & \Lambda \ar[r] & \Lambda / (
\sigma_{\alpha}^{a-1} ) \ar[r] & 0 }$$ which are commutative by
Lemma \ref{lincomb}(ii). Since the rows are exact, the modules
$\Lambda / ( \sigma_{\alpha} )$ and $\Lambda / (
\sigma_{\alpha}^{a-1} )$ are both $2$-periodic with respect to the
syzygy operator, i.e.\ $\Omega_{\Lambda}^{2u} ( \Lambda / (
\sigma_{\alpha} ) ) \simeq \Lambda / ( \sigma_{\alpha} )$ and
$\Omega_{\Lambda}^{2u} ( \Lambda / ( \sigma_{\alpha}^{a-1} ) )
\simeq \Lambda / ( \sigma_{\alpha}^{a-1} )$ for any $u \in
\mathbb{Z}$. Moreover, the commutativity of the squares shows that
the two maps $\Lambda / ( \sigma_{\alpha} ) \xrightarrow{\cdot
\sum_{i=0}^{a-2} \sigma_{\alpha}^i x_p \sigma_{\alpha}^{a-2-i}}
\Lambda / ( \sigma_{\alpha}^{a-1} )$ and $\Lambda / (
\sigma_{\alpha}^{a-1} ) \xrightarrow{\cdot x_p} \Lambda / (
\sigma_{\alpha} )$ are also $2$-periodic with respect to the syzygy
operator. Now let $\Lambda / ( \sigma_{\alpha}^{a-1} )
\xrightarrow{g} \Omega_{\Lambda}^j(M)$ and $\Lambda / (
\sigma_{\alpha} ) \xrightarrow{f} \Omega_{\Lambda}^j(M)$ be any
maps, and choose an integer $u \in \mathbb{Z}$ such that
$\Omega_{\Lambda}^{j+2u}(M)$ is isomorphic to either $M$ or
$\Omega_{\Lambda}(M)$ in $\underline{\mod} \Lambda$. Then the
diagrams
$$\xymatrix@C=60pt{
\Lambda / ( \sigma_{\alpha} ) \ar[r]^{\cdot \sum_{i=0}^{a-2}
\sigma_{\alpha}^i x_p \sigma_{\alpha}^{a-2-i}} \ar[d]^{\wr} &
\Lambda / ( \sigma_{\alpha}^{a-1} ) \ar[r]^g \ar[d]^{\wr} &
\Omega_{\Lambda}^j(M) \\
\Lambda / ( \sigma_{\alpha} ) \ar[r]^{\cdot \sum_{i=0}^{a-2}
\sigma_{\alpha}^i x_p \sigma_{\alpha}^{a-2-i}} & \Lambda / (
\sigma_{\alpha}^{a-1} ) \ar[r]^{\Omega_{\Lambda}^{2u}(g)} &
\Omega_{\Lambda}^{j+2u}(M) \\
\Lambda / ( \sigma_{\alpha}^{a-1} ) \ar[r]^{\cdot x_p} \ar[d]^{\wr}
& \Lambda / ( \sigma_{\alpha} ) \ar[r]^f \ar[d]^{\wr} &
\Omega_{\Lambda}^j(M) \\
\Lambda / ( \sigma_{\alpha}^{a-1} ) \ar[r]^{\cdot x_p} & \Lambda / (
\sigma_{\alpha} ) \ar[r]^{\Omega_{\Lambda}^{2u}(f)} &
\Omega_{\Lambda}^{j+2u}(M) }$$ are commutative in $\underline{\mod}
\Lambda$, where the vertical maps are isomorphisms. We have already
shown that the bottom compositions are zero in $\underline{\mod}
\Lambda$, but then so are the top compositions, since these are
shifts of the bottom compositions.
\end{proof}

We are now ready to prove that the representation dimension of
$\Lambda_n^a$ is at least $n+1$ when $n$ is even and the field $k$
is infinite.

\begin{proposition}\label{infinitefieldeven}
If $k$ is infinite and $n$ is even, then $\repdim \Lambda_n^a \ge
n+1$.
\end{proposition}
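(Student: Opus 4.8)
The plan is to carry out the programme announced just before Lemma~\ref{lincomb}: given an arbitrary $M\in\underline{\mod}\Lambda^a_n$, I will produce $n$ objects $N_1,\dots,N_n$ and $n-1$ morphisms $N_i\xrightarrow{f_i}N_{i+1}$ that are ghosts relative to $\langle M\rangle$ and whose composite is nonzero in $\underline{\mod}\Lambda^a_n$; then \cite[Lemma~4.11]{Rouquier1} yields $\dim(\underline{\mod}\Lambda^a_n)\ge n-1$, and Proposition~\ref{lowerbound} gives $\repdim\Lambda^a_n\ge n+1$. All the building blocks come from Lemma~\ref{factoring}. Fix an $\alpha$ (to be pinned down below), abbreviate $\Lambda=\Lambda^a_n$, and set $S_p=\sum_{i=0}^{a-2}\sigma_\alpha^i x_p\sigma_\alpha^{a-2-i}$. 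Lemma~\ref{factoring} provides, for every $p$, the two left-module maps $\Lambda/(\sigma_\alpha)\xrightarrow{\cdot S_p}\Lambda/(\sigma_\alpha^{a-1})$ and $\Lambda/(\sigma_\alpha^{a-1})\xrightarrow{\cdot x_p}\Lambda/(\sigma_\alpha)$, each of which kills, upon precomposition, every homomorphism from its target into any shift $\Omega_{\Lambda}^j(M)$.

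Since $n$ is even, $n-1$ is odd, so I let the chain alternate these two maps, beginning with a map of the first type:
$$\Lambda/(\sigma_\alpha)\xrightarrow{\cdot S_{p_1}}\Lambda/(\sigma_\alpha^{a-1})\xrightarrow{\cdot x_{p_2}}\Lambda/(\sigma_\alpha)\xrightarrow{\cdot S_{p_3}}\cdots\xrightarrow{\cdot S_{p_{n-1}}}\Lambda/(\sigma_\alpha^{a-1}),$$
which has exactly $n-1$ maps and ends at $\Lambda/(\sigma_\alpha^{a-1})$; the variables $p_1,\dots,p_{n-1}$ will be selected in the last step. The ghost property is then immediate: by Lemma~\ref{factoring} each $f_i$ induces the zero map $\underline{\Hom}_\Lambda(N_{i+1},\Omega_\Lambda^j M)\to\underline{\Hom}_\Lambda(N_i,\Omega_\Lambda^j M)$ for all $j$ (the $2$-periodicity established in Lemma~\ref{factoring} propagates the cases $j=0,1$ to all $j$), and since every object of $\langle M\rangle$ is a direct summand of a finite direct sum of shifts $\Omega_\Lambda^j M$, this is exactly the vanishing required by \cite[Lemma~4.11]{Rouquier1}.

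It remains to choose $\alpha$ and the $p_i$ so that the composite is nonzero in $\underline{\mod}\Lambda$. Because all the maps are right multiplications, the composite $\Lambda/(\sigma_\alpha)\to\Lambda/(\sigma_\alpha^{a-1})$ is right multiplication by $c=S_{p_1}x_{p_2}S_{p_3}\cdots S_{p_{n-1}}$. Using the exact sequence $0\to\Lambda/(\sigma_\alpha)\xrightarrow{\cdot\sigma_\alpha^{a-1}}\Lambda\to\Lambda/(\sigma_\alpha^{a-1})\to0$ to identify the projective cover of the target, one checks that $\cdot c$ is nonzero in $\underline{\mod}\Lambda$ if and only if $\sigma_\alpha c\notin\sigma_\alpha\Lambda\sigma_\alpha^{a-1}$. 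Moreover Lemma~\ref{lincomb}(ii) gives the key simplification $\sigma_\alpha S_p=-x_p\sigma_\alpha^{a-1}$, whence
$$\sigma_\alpha c=-\,x_{p_1}\sigma_\alpha^{a-1}x_{p_2}S_{p_3}x_{p_4}\cdots S_{p_{n-1}},$$
an explicit element of degree $\tfrac n2\,a$.

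The hard part, which I expect to be the main obstacle, is to verify that this element is nonzero modulo $\sigma_\alpha\Lambda\sigma_\alpha^{a-1}$ for a suitable $\alpha$ and a suitable choice of $p_1,\dots,p_{n-1}$. I would do this with a leading-term computation in the monomial basis $\{x_1^{b_1}\cdots x_n^{b_n}\mid 0\le b_i\le a-1\}$: choosing the $p_i$ to run through the variables so that, for $\alpha$ in some nonempty open subset $V\subseteq k^n$, a single monomial survives in $\sigma_\alpha c$ with a coefficient that is a nonzero polynomial in $\alpha$ and that cannot be produced by any element of $\sigma_\alpha\Lambda\sigma_\alpha^{a-1}$. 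Controlling the $q$-twisted multiplication precisely enough to guarantee both the nonzero coefficient and the non-membership is where the work lies, and it is exactly here that the evenness of $n$ enters, forcing the chain to begin and end with the two distinct modules $\Lambda/(\sigma_\alpha)$ and $\Lambda/(\sigma_\alpha^{a-1})$ and making the degree bookkeeping close up. Once $V$ is found, I pick $\alpha$ in the nonempty open set $U\cap V$, where $U$ is the set from Lemma~\ref{factoring} (the intersection is nonempty since $k$ is infinite), and the two displayed properties of the chain hold simultaneously, completing the proof via \cite[Lemma~4.11]{Rouquier1} and Proposition~\ref{lowerbound}.
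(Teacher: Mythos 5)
Your architecture is the paper's: an alternating chain of right-multiplication maps between $\Lambda/(\sigma_\alpha)$ and $\Lambda/(\sigma_\alpha^{a-1})$, the ghost property supplied by Lemma~\ref{factoring}, and the conclusion via \cite[Lemma 4.11]{Rouquier1} and Proposition~\ref{lowerbound}. But the write-up stops exactly where the proof of the proposition actually begins. The step you defer to ``a leading-term computation'' --- producing a non-empty open set of $\alpha$, and a choice of $p_1,\dots,p_{n-1}$, for which the composite is stably nonzero --- is not a verification to be filled in later; it is the content of the result and occupies well over half of the paper's argument. You never choose the $p_i$, never exhibit the surviving monomial, and never explain how the $q$-twisted commutation relations are to be controlled. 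In the paper this is done by: (i) for $\alpha_1\neq 0$, splitting $\lambda=N_{x_1}(\lambda)+\sigma R_{x_1}(\lambda)$ and reducing membership of $w_\alpha$ in $\sigma_\alpha\Lambda+\Lambda\sigma_\alpha$ to an identity $N_{x_1}(w)=\sigma'h-h\sigma'$, where commuting $h$ past $x_1$ requires the degree of $h$ to be the multiple $(\tfrac n2-1)a$ of $a$ so that the relevant power of $q$ is $1$ --- this is the first place the primitive-root hypothesis is consumed; (ii) noting the coefficient of $\lambda=x_2x_3^{a-1}\cdots x_{n-1}^{a-1}x_n$ in any $\sigma'h-h\sigma'$ vanishes; and (iii) evaluating at the specific tuple $(\alpha_1,0,1,\dots,1,0)$, where the identity $N_{x_1}\bigl((q^{-1}\alpha_1x_1+\cdots+x_{n-1})^i\bigr)=\bigl(\prod_{j=1}^i(1-q^{-j})\bigr)x_{n-1}^i$ isolates a single monomial with coefficient nonzero precisely because $q$ is a primitive $a$th root of unity and $i\le a-2$. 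None of this is routine, and your remark that ``the homogeneity and the root-of-unity hypothesis enter here'' identifies the location of the theorem rather than proving it.

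A secondary point: your chain runs in the opposite direction to the paper's (you start at $\Lambda/(\sigma_\alpha)$ and end at $\Lambda/(\sigma_\alpha^{a-1})$; the paper does the reverse). Your stable-vanishing criterion is in fact correct as stated --- the composite is stably zero iff $\sigma_\alpha c\in\sigma_\alpha\Lambda\sigma_\alpha^{a-1}$, by lifting through the projective cover of the target --- but it means the element you must analyze ($\sigma_\alpha c$, of degree $\tfrac n2a$) and the subspace you must avoid ($\sigma_\alpha\Lambda\sigma_\alpha^{a-1}$) are genuinely different from the paper's ($w_\alpha$, of degree $(\tfrac n2-1)a+1$, versus $\sigma_\alpha\Lambda+\Lambda\sigma_\alpha$). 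So even granting the paper's computation, it cannot be quoted to close your version of the gap; the entire monomial analysis would have to be redone for your orientation, and it is not evident that the same choice of tuple and the same distinguished monomial work.
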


\begin{proof}
As in the previous proof, we denote our algebra $\Lambda_n^a$ by
just $\Lambda$. Let $\alpha \in k^n$ be an $n$-tuple, and denote the
element
$$x_{n-1} \left ( \sum_{i=0}^{a-2} \sigma_{\alpha}^i x_n \sigma_{\alpha}^{a-2-i}
\right ) x_{n-3} \left ( \sum_{i=0}^{a-2} \sigma_{\alpha}^i x_{n-2}
\sigma_{\alpha}^{a-2-i} \right ) \cdots x_3 \left ( \sum_{i=0}^{a-2}
\sigma_{\alpha}^i x_4 \sigma_{\alpha}^{a-2-i} \right ) x_2$$ by
$w_{\alpha}$. In the first (and longest) part of this proof, we show
that the set of all $\alpha \in k^n$ such that $w_{\alpha}$ does not
belong to $\sigma_{\alpha} \Lambda + \Lambda \sigma_{\alpha}$
contains a non-empty open set.

Fix any $n$-tuple $\alpha = ( \alpha_1, \dots, \alpha_n ) \in k^n$
with $\alpha_1 \neq 0$, and, for simplicity, denote the
corresponding $\sigma_{\alpha}$ and $w_{\alpha}$ by $\sigma$ and
$w$, respectively. Every element $\lambda \in \Lambda$ admits a
unique decomposition $\lambda = N_{x_1}( \lambda ) + \sigma R_{x_1}
( \lambda )$, in which $x_1$ does not occur in any of the monomials
in $N_{x_1}( \lambda )$. With this notation, we see that
\begin{eqnarray*}
w \in \sigma \Lambda + \Lambda \sigma & \Leftrightarrow &
N_{x_1}(w) \in \sigma \Lambda + \Lambda \sigma \\
& \Leftrightarrow & N_{x_1}(w) + h \sigma \in \sigma \Lambda
\text{ for some } h \in \Lambda \\
& \Leftrightarrow & N_{x_1}(w) + h \sigma \in \sigma \Lambda
\text{ for some } h \in \Lambda \text{ with } N_{x_1}(h)=h \\
& \Leftrightarrow & N_{x_1} \left ( N_{x_1}(w) + h \sigma \right )
=0 \text{ for some } h \in \Lambda \text{ with } N_{x_1}(h)=h,
\end{eqnarray*}
where we may assume that the $h$ occurring is homogeneous. Now note
that the degree of $w$ is $\left ( \frac{n}{2} - 1 \right ) a+1$.
This implies that any homogeneous $h$ satisfying the above
implications is of degree $\left ( \frac{n}{2} - 1 \right ) a$. If
in addition $N_{x_1}(h)=h$, i.e.\ if $x_1$ does not occur in any of
the monomials in $h$, then $hx_1 = q^{- \left ( \frac{n}{2} - 1
\right ) a}x_1h = x_1h$. Writing $\sigma' = \sigma - \alpha_1 x_1$
we then get
$$h \sigma = \alpha_1hx_1 + h \sigma' = \alpha_1x_1 h +
h \sigma'  = \sigma h - \sigma' h + h \sigma'.$$ What we have so
far gives
\begin{eqnarray*}
w \in \sigma \Lambda + \Lambda \sigma & \Leftrightarrow & N_{x_1}
\left ( N_{x_1}(w) + h \sigma \right ) =0 \text{ with } N_{x_1}(h)=h \\
& \Leftrightarrow & N_{x_1} \left ( N_{x_1}(w) + \sigma h -
\sigma' h + h \sigma' \right ) =0 \text{ with } N_{x_1}(h)=h \\
& \Leftrightarrow & N_{x_1} \left ( N_{x_1}(w) - \sigma' h + h
\sigma' \right ) =0 \text{ with } N_{x_1}(h)=h  \\
& \Leftrightarrow & N_{x_1}(w) - \sigma' h + h \sigma' =0 \text{
with } N_{x_1}(h)=h,
\end{eqnarray*}
where the last implication follows from the fact that $x_1$ does not
occur in any of the monomials in $N_{x_1}(w) - \sigma' h + h
\sigma'$. Thus $w$ belongs to $\sigma \Lambda + \Lambda \sigma$ if
and only if $N_{x_1}(w) = \sigma' h - h \sigma'$ for some
homogeneous element $h$ satisfying $N_{x_1}(h)=h$.

Let $\lambda$ be the element $x_2x_3^{a-1}x_4 \cdots
x_{n-1}^{a-1}x_n$, which has the same degree as $w$. For any
homogeneous element $h$ of degree one less than that of $w$, the
coefficient of $\lambda$ in $\sigma' h - h \sigma'$ is easily seen
to be zero. Therefore $w \notin \sigma \Lambda + \Lambda \sigma$ if
the coefficient of $\lambda$ in $N_{x_1}(w)$ is nonzero. Note that
the set of tuples $(\alpha_2, \dots, \alpha_n) \in k^{n-1}$ for
which this holds is open.

Consider the tuple $( \alpha_1,0,1, \dots, 1,0) \in k^n$, that is,
we take as $\sigma$ the element
$$\sigma = \alpha_1x_1 + x_3 + \cdots + x_{n-1}.$$
Define the element $w' \in \Lambda$ by
$$w' = x_{n-1} \sum_{i=0}^{a-2} \sigma^i x_n \sigma^{a-2-i},$$
i.e.\ $w'$ is the ``first part" of $w$. Since $x_n \sigma = q^{-1}
\sigma x_n$, we see that
\begin{eqnarray*}
w' & = & x_{n-1} \beta \sigma^{a-2} x_n \\
& = & \beta ( q^{-1} \alpha_1x_1 + \cdots + q^{-1}x_{n-3} + x_{n-1}
)^{a-2}x_{n-1}x_n
\end{eqnarray*}
for some nonzero element $\beta \in k$. Using the equality
$N_{x_1}(zy) = N_{x_1} \left ( N_{x_1}(z)y \right )$, which holds
for any elements $z,y \in \Lambda$, an induction argument gives
$$N_{x_1} \left ( ( q^{-1} \alpha_1x_1 + \cdots + q^{-1}x_{n-3} + x_{n-1}
)^i \right ) = \left ( \prod_{j=1}^i (1-q^{-j}) \right )
x_{n-1}^i.$$ Applying $N_{x_1}$ to the above expression for $w'$
then gives
\begin{eqnarray*}
N_{x_1}( w' ) &=& \beta N_{x_1} \left ( N_{x_1} \left [ ( q^{-1}
\alpha_1x_1 + \cdots + q^{-1}x_{n-3} + x_{n-1} )^{a-2} \right ]
x_{n-1}x_n
\right ) \\
&=& \beta N_{x_1} \left ( \left [ \prod_{j=1}^{a-2} (1-q^{-j})
\right ] x_{n-1}^{a-1}x_n \right ) \\
&=& \beta' x_{n-1}^{a-1}x_n,
\end{eqnarray*}
where $\beta'$ is some nonzero element in $k$. Now define $w''$ by
$$w'' =x_{n-3} \left ( \sum_{i=0}^{a-2} \sigma^i x_{n-2}
\sigma^{a-2-i} \right ) \cdots x_3 \left ( \sum_{i=0}^{a-2} \sigma^i
x_4 \sigma^{a-2-i} \right ) x_2,$$ that is, the element $w$ is given
by $w=w'w''$. By what we have just shown, the equality
$$N_{x_1}(w) = N_{x_1} \left ( N_{x_1} (w')w'' \right )  = \beta' N_{x_1}
(x_{n-1}^{a-1}x_nw'')$$ holds. Now every monomial in $w''$
containing $x_{n-1}$ does not ``contribute" to
$x_{n-1}^{a-1}x_nw''$, therefore we may replace every $\sigma$ in
$w''$ by $\sigma - x_{n-1}$. Then if we repeat the above process, we
see that
$$N_{x_1}(w) = \delta x_2x_3^{a-1}x_4 \cdots
x_{n-1}^{a-1}x_n = \delta \lambda,$$ where $\delta$ is some nonzero
element in $k$. This shows that the coefficient of the monomial
$\lambda$ in $N_{x_1}(w)$ is nonzero, i.e.\ $w \notin \sigma \Lambda
+ \Lambda \sigma$.

Define the set $V \subseteq k^n$ by
$$V = \{ \alpha \in k^n \mid w_{\alpha} \notin \sigma_{\alpha}
\Lambda + \Lambda \sigma_{\alpha} \},$$ and consider the set
$$W = \{ ( \alpha_2, \dots, \alpha_n ) \in k^{n-1} \mid ( \alpha_1, \alpha_2,
\dots, \alpha_n) \in V \text{ for all } 0 \neq \alpha_1 \in k \}.$$
We have just shown that $W$ is open and non-empty, hence the subset
$$\{ ( \alpha_1, \alpha_2, \dots, \alpha_n) \in k^n \mid \alpha_1
\neq 0 \text{ and } ( \alpha_2, \dots, \alpha_n ) \in W \}$$ of $V$
is non-empty and open in $k^n$.

Now let $M$ be a $\Lambda$-module, and let $U \subseteq k^n$ be a
non-empty open set having the properties stated in Lemma
\ref{factoring}. Since $V$ contains a non-empty open subset, the
intersection $U \cap V$ is not empty, and therefore contains an
$n$-tuple $\alpha \in k^n$. For each $1 \le i \le n-1$, define maps
$f_i \in \underline{\mod} \Lambda$ by
$$f_i = \left \{
\begin{array}{ll}
\Lambda / ( \sigma_{\alpha}^{a-1} ) \xrightarrow{\cdot x_{n-i}}
\Lambda / ( \sigma_{\alpha} ) & \text{ when } i \text{ is odd and }
i \le n-3 \\
\Lambda / ( \sigma_{\alpha} ) \xrightarrow{\cdot \sum_{i=0}^{a-2}
\sigma_{\alpha}^i x_{n+2-i} \sigma_{\alpha}^{a-2-i}} \Lambda / (
\sigma_{\alpha}^{a-1} ) & \text{ when } i \text{ is even} \\
\Lambda / ( \sigma_{\alpha}^{a-1} ) \xrightarrow{\cdot x_2} \Lambda
/ ( \sigma_{\alpha} ) & \text{ when } i=n-1.
\end{array} \right.$$
By Lemma \ref{factoring} each induced map $(f_i)_*$ vanishes on the
subcategory $\langle M \rangle$ of $\underline{\mod} \Lambda$.
However, the composition $(f_{n-1})_* \circ \cdots \circ (f_1)_*$
does not vanish: if it did then the map $\Lambda / (
\sigma_{\alpha}^{a-1} ) \xrightarrow{\cdot w_{\alpha}} \Lambda / (
\sigma_{\alpha} )$ would be zero in $\underline{\mod} \Lambda$.
However, the injective envelope of $\Lambda / (
\sigma_{\alpha}^{a-1} )$ is $\Lambda / ( \sigma_{\alpha}^{a-1} )
\xrightarrow{\cdot \sigma_{\alpha}} \Lambda$, whereas the projective
cover of $\Lambda / ( \sigma_{\alpha} )$ is the projection $\Lambda
\xrightarrow{\pi} \Lambda / ( \sigma_{\alpha} )$. Therefore, if the
map $\Lambda / ( \sigma_{\alpha}^{a-1} ) \xrightarrow{\cdot
w_{\alpha}} \Lambda / ( \sigma_{\alpha} )$ was zero in
$\underline{\mod} \Lambda$, then it would factor in a commutative
diagram
$$\xymatrix{
\Lambda / ( \sigma_{\alpha}^{a-1} ) \ar[d]^{\cdot \sigma_{\alpha}}
\ar[r]^{\cdot w_{\alpha}} & \Lambda / ( \sigma_{\alpha} ) \\
\Lambda \ar[r] & \Lambda \ar[u]^{\pi} }$$ in $\mod \Lambda$. Since
any homomorphism $\Lambda \to \Lambda$ is right multiplication with
an element in $\Lambda$, this would mean that there exists an
element $\lambda \in \Lambda$ such that $w_{\alpha} =
\sigma_{\alpha} \lambda$ in $\Lambda / ( \sigma_{\alpha} )$. In
other words, the element $w_{\alpha}$ would belong to
$\sigma_{\alpha} \Lambda + \Lambda \sigma_{\alpha}$, and this is not
the case since $\alpha \in V$. From \cite[Lemma 4.11]{Rouquier1} we
see that $\dim ( \underline{\mod} \Lambda ) \ge n-1$, and so
Proposition \ref{lowerbound} gives $\repdim \Lambda \ge n+1$.
\end{proof}

\begin{remark}
It is also possible to prove Proposition \ref{infinitefieldeven}
using \cite[Theorem 1(b)]{Oppermann2}. However, as in the proof we
just gave, the key ingredient in such a proof is deciding whether or
not $w_{\alpha}$ belongs to $\sigma_{\alpha} \Lambda + \Lambda
\sigma_{\alpha}$.
\end{remark}

Having established the case when $n$ is even and the field $k$ is
infinite, we prove the main theorem in this paper, namely the
general case.

\begin{theorem}\label{infinitefieldodd}
The representation dimension of $\Lambda_n^a$ satisfies
$$n+1 \le \repdim \Lambda_n^a \le 2n.$$
\end{theorem}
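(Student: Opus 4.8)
The plan is to establish the two inequalities separately. The upper bound $\repdim \Lambda_n^a \le 2n$ is immediate: the algebra $\Lambda_n^a$ is a quantum complete intersection of codimension $n$, so Theorem~\ref{upperbound} applies directly and gives the right-hand inequality with no further work. The entire difficulty therefore lies in proving the lower bound $\repdim \Lambda_n^a \ge n+1$, and the strategy is to reduce the general case to Proposition~\ref{infinitefieldeven}, which already handles the situation where $k$ is infinite and $n$ is even. I must remove both the evenness hypothesis and the infiniteness hypothesis.

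First I would dispose of the parity restriction. The key observation is that $\Lambda_n^a$ sits inside $\Lambda_{n+1}^a$ as the subalgebra $(\Lambda_{n+1}^a)_{1,\dots,n}$ generated by $x_1,\dots,x_n$, and conversely $\Lambda_{n+1}^a$ retracts onto $\Lambda_n^a$ via the homomorphisms $\Lambda_n^a \to \Lambda_{n+1}^a \to \Lambda_n^a$ whose composite is the identity, exactly as in the proof of Theorem~\ref{chain}. Since $\Lambda_{n+1}^a$ is free (hence projective) as a $\Lambda_n^a$-module, Lemma~\ref{subalgebra} yields $\dim(\underline{\mod}\,\Lambda_n^a) \le \dim(\underline{\mod}\,\Lambda_{n+1}^a)$. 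Thus if $n$ is odd, then $n+1$ is even, and applying Proposition~\ref{infinitefieldeven} to $\Lambda_{n+1}^a$ (still assuming $k$ infinite) gives
$$\dim(\underline{\mod}\,\Lambda_n^a) \le \dim(\underline{\mod}\,\Lambda_{n+1}^a) \ge (n+1)-1 = n,$$
which is the wrong direction. So instead I would go the other way: embed the odd case below an even case by comparing $\dim(\underline{\mod}\,\Lambda_{n-1}^a)$ with $\dim(\underline{\mod}\,\Lambda_n^a)$, using that for odd $n$ the integer $n-1$ is even. Proposition~\ref{infinitefieldeven} gives $\dim(\underline{\mod}\,\Lambda_{n-1}^a) \ge n-2$, and the monotonicity from Lemma~\ref{subalgebra} gives $\dim(\underline{\mod}\,\Lambda_{n-1}^a) \le \dim(\underline{\mod}\,\Lambda_n^a)$, which again only yields $\ge n-2$. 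This shows the crude comparison is too lossy by one, so the honest route is to re-run the argument of Proposition~\ref{infinitefieldeven} directly for odd $n$, adjusting the product defining $w_\alpha$ so that it has the correct alternating length; the combinatorics of Lemmas~\ref{lincomb}--\ref{factoring} are parity-independent, so the only change is bookkeeping in the definition of the maps $f_i$ and the witness element $w_\alpha$.

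Second I would remove the hypothesis that $k$ is infinite by a faithfully flat base change. Let $K/k$ be an infinite field extension (for instance the algebraic closure, or $k(t)$). As noted in the proof of Lemma~\ref{lincomb}, the algebra $K \otimes_k \Lambda_n^a$ is again a homogeneous quantum complete intersection of codimension $n$ with the same defining data, now over the infinite field $K$, so the already-established lower bound gives $\repdim (K \otimes_k \Lambda_n^a) \ge n+1$. The representation dimension does not increase under such an extension: a generator--cogenerator $M$ of $\mod \Lambda_n^a$ with $\gldim \End_{\Lambda_n^a}(M) = \repdim \Lambda_n^a$ extends to $K \otimes_k M$, which generates and cogenerates $\mod (K \otimes_k \Lambda_n^a)$, and $\End_{K\otimes_k \Lambda_n^a}(K \otimes_k M) \cong K \otimes_k \End_{\Lambda_n^a}(M)$ has global dimension at most that of $\End_{\Lambda_n^a}(M)$ since global dimension is preserved or decreased under the flat extension $K \otimes_k -$. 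Hence
$$n+1 \le \repdim(K \otimes_k \Lambda_n^a) \le \repdim \Lambda_n^a,$$
and the lower bound descends to the finite-field case.

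\textbf{The main obstacle} I expect is the off-by-one issue in the parity reduction: the monotonicity inequality from Lemma~\ref{subalgebra} compares consecutive stable-category dimensions but loses exactly the unit one needs to transport the bound $n-2$ (for even $n-1$) up to $n-1$ (for odd $n$). Because of this, the cleanest solution is almost certainly to give the odd-$n$ argument \emph{directly} by rewriting Proposition~\ref{infinitefieldeven} with an odd-length witness word $w_\alpha$, rather than trying to bootstrap from the even case via subalgebra comparisons. The infinite-field reduction, by contrast, is routine once one checks that $K \otimes_k -$ preserves the generator/cogenerator property and does not raise global dimension of the endomorphism algebra.
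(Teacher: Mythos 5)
Your upper bound (quoting Theorem \ref{upperbound}), your diagnosis that the naive comparison via Lemma \ref{subalgebra} loses exactly one unit, and the overall architecture (even case first, then odd, then finite fields) all match the paper. Your finite-field reduction by scalar extension is essentially the right idea (and is what the cited strategy of \cite[Section 4]{Oppermann1} amounts to), but note that your blanket claim that $K \otimes_k -$ never raises the global dimension of the endomorphism algebra is false for general field extensions --- it fails for inseparable extensions. It is rescued here only because a finite field is perfect, so every extension is separable and $\gldim(K \otimes_k \Gamma) = \gldim \Gamma$; you should say this explicitly.

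The genuine gap is in the odd case. You propose to ``re-run Proposition \ref{infinitefieldeven} with an odd-length witness word'' and assert that this is only bookkeeping, but that is precisely where all the remaining content lies, and it is not bookkeeping. Dropping the final factor $x_2$ from $w_\alpha$ changes the composite map from one of the form $\Lambda/(\sigma_\alpha^{a-1}) \to \Lambda/(\sigma_\alpha)$ to one of the form $\Lambda/(\sigma_\alpha^{a-1}) \to \Lambda/(\sigma_\alpha^{a-1})$, so the condition for non-vanishing in the stable category becomes $\tilde w_{\tilde\alpha} \notin \tilde\sigma_{\tilde\alpha}\tilde\Lambda + \tilde\Lambda\tilde\sigma_{\tilde\alpha}^{a-1}$ --- a different set, with exponent $a-1$ on one side --- and you would still have to show this holds generically. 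Moreover, the even-case computation certifying $w_\alpha \notin \sigma_\alpha\Lambda + \Lambda\sigma_\alpha$ uses that $\deg w_\alpha = (\tfrac{n}{2}-1)a+1$, so that the commutation factor $q^{-\deg h}$ equals $1$ for the relevant homogeneous $h$; the shortened word has degree divisible by $a$, and that argument does not transfer verbatim. The paper closes this gap with a specific device you do not supply: for $n$ even it takes $\tilde\Lambda = \Lambda_{2,\dots,n}$ inside $\Lambda = \Lambda_n^a$, maps $\tilde\Lambda \to \Lambda$ by $x_2 \mapsto x_1 + x_2$ (fixing the other generators), observes that $f(\tilde w_{\tilde\alpha})x_2 = w_\alpha$, and uses Lemma \ref{lincomb}(ii) to see that $\sigma_\alpha\Lambda + \Lambda\sigma_\alpha^{a-1}x_2 \subseteq \sigma_\alpha\Lambda + \Lambda\sigma_\alpha$, so the odd non-membership follows from the even one already established. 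Without this step, or an equivalent direct computation for the odd-length word, your lower bound for odd $n$ is unproven.
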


\begin{proof}
Suppose first that $k$ is infinite. Let $n$ be even, and denote our
algebra $\Lambda_n^a$ by $\Lambda$. Furthermore, denote by
$\tilde{\Lambda}$ the algebra $\Lambda_{2,3,\dots,n}$, where the
notation is the same as that used in Theorem \ref{chain}. In other
words, the algebra $\tilde{\Lambda}$ is the codimension $n-1$
quantum complete intersection subalgebra of $\Lambda$ generated by
$x_2, \dots, x_n$. Our first aim is to show that $\repdim
\tilde{\Lambda} \ge n$. We may assume $n \ge 4$, since we know that
the representation dimension of the truncated polynomial ring is
two.

Given an $n$-tuple $\alpha \in k^n$, define as before the
corresponding element $\sigma_{\alpha} \in \Lambda$, and denote by
$w_{\alpha}$ the element
$$x_{n-1} \left ( \sum_{i=0}^{a-2} \sigma_{\alpha}^i x_n \sigma_{\alpha}^{a-2-i}
\right ) x_{n-3} \left ( \sum_{i=0}^{a-2} \sigma_{\alpha}^i x_{n-2}
\sigma_{\alpha}^{a-2-i} \right ) \cdots x_3 \left ( \sum_{i=0}^{a-2}
\sigma_{\alpha}^i x_4 \sigma_{\alpha}^{a-2-i} \right ) x_2.$$ We
showed in the proof of the previous result that the set
$$V = \{ \alpha \in k^n \mid w_{\alpha} \notin
\sigma_{\alpha} \Lambda + \Lambda \sigma_{\alpha} \}$$ contains a
non-empty open subset. Now for any $(n-1)$-tuple $\tilde{\alpha} =(
\alpha_2, \dots, \alpha_n ) \in k^{n-1}$, define the element
$\tilde{\sigma}_{\tilde{\alpha}} \in \tilde{\Lambda}$ by
$\tilde{\sigma}_{\tilde{\alpha}} = \alpha_2x_2 + \cdots +
\alpha_nx_n$, and denote by $\tilde{w}_{\tilde{\alpha}}$ the element
$$x_{n-1} \left ( \sum_{i=0}^{a-2} \tilde{\sigma}_{\tilde{\alpha}}^i x_n
\tilde{\sigma}_{\tilde{\alpha}}^{a-2-i}
\right ) x_{n-3} \left ( \sum_{i=0}^{a-2}
\tilde{\sigma}_{\tilde{\alpha}}^i x_{n-2}
\tilde{\sigma}_{\tilde{\alpha}}^{a-2-i} \right ) \cdots x_3 \left (
\sum_{i=0}^{a-2} \tilde{\sigma}_{\tilde{\alpha}}^i x_4
\tilde{\sigma}_{\tilde{\alpha}}^{a-2-i} \right ) $$ in
$\tilde{\Lambda}$. Furthermore, consider the subset
$$\tilde{V} = \{ \tilde{\alpha} \in k^{n-1} \mid
\tilde{w}_{\tilde{\alpha}} \notin \tilde{\sigma}_{\tilde{\alpha}}
\tilde{\Lambda} + \tilde{\Lambda}
\tilde{\sigma}_{\tilde{\alpha}}^{a-1} \}$$ of $k^{n-1}$. We show
that this set contains a non-empty subset which is open.

Recall from the previous proof that the set
$$W = \{ ( \alpha_2, \dots, \alpha_n ) \in k^{n-1} \mid ( \alpha_1, \alpha_2,
\dots, \alpha_n) \in V \text{ for all } 0 \neq \alpha_1 \in k \}$$
is non-empty and open in $k^{n-1}$. Furthermore, let $W'$ be the set
of all $(n-1)$-tuples $( \alpha_2, \dots, \alpha_n )$ in which
$\alpha_2$ is nonzero. Since $W'$ is non-empty and open, the
intersection $\tilde{W} = W \cap W'$ is also non-empty and open.
Pick therefore an element $\tilde{\alpha} = ( \alpha_2, \dots,
\alpha_n ) \in \tilde{W}$, let $\alpha \in k^n$ be the $n$-tuple
$\alpha = ( \alpha_2, \alpha_2, \dots, \alpha_n )$, and define a map
$f \colon \tilde{\Lambda} \to \Lambda$ by
$$x_i \mapsto \left \{
\begin{array}{ll}
x_1 + x_2 & \text{when } i=2 \\
x_i & \text{when } i \neq 2.
\end{array}
\right.$$ Note that $f( \tilde{\sigma}_{\tilde{\alpha}}) =
\sigma_{\alpha}$ and that $\alpha$ belongs to $V$. Moreover, note
that $f( \tilde{w}_{\tilde{\alpha}}) x_2 = w_{\alpha}$, and so if
$\tilde{w}_{\tilde{\alpha}}$ belongs to
$\tilde{\sigma}_{\tilde{\alpha}} \tilde{\Lambda} + \tilde{\Lambda}
\tilde{\sigma}_{\tilde{\alpha}}^{a-1}$ then $w_{\alpha}$ belongs to
$\sigma_{\alpha} \Lambda + \Lambda \sigma_{\alpha}^{a-1} x_2$.
However, from Lemma \ref{lincomb}(ii) we see that $\sigma_{\alpha}
\Lambda + \Lambda \sigma_{\alpha}^{a-1} x_2$ is contained in
$\sigma_{\alpha} \Lambda + \Lambda \sigma_{\alpha}$, and we know
that $w_{\alpha} \notin \sigma_{\alpha} \Lambda + \Lambda
\sigma_{\alpha}$ since $\alpha \in V$. Therefore
$\tilde{w}_{\tilde{\alpha}}$ cannot belong to
$\tilde{\sigma}_{\tilde{\alpha}} \tilde{\Lambda} + \tilde{\Lambda}
\tilde{\sigma}_{\tilde{\alpha}}^{a-1}$, and this shows that
$\tilde{W}$ is a subset of $\tilde{V}$. Consequently, the set
$\tilde{V}$ contains a non-empty subset which is open.

The arguments we applied at the end of the previous proof now shows
that $\repdim \tilde{\Lambda} \ge n$, and we have therefore proved
that $\repdim \Lambda_n^a \ge n+1$ when the field $k$ is infinite.
However, when $k$ is finite, the strategy applied in \cite[Section
4]{Oppermann1} carries over to our algebra and shows that the
inequality still holds. Therefore $\repdim \Lambda_n^a \ge n+1$
regardless of whether the field $k$ is infinite. This proves the
first inequality in the theorem, the other is Theorem
\ref{upperbound}.
\end{proof}

\begin{corollary}\label{subalg}
Let $\Lambda$ be a general quantum complete intersection, i.e.\
$$\Lambda = k \langle X_1, \dots, X_n \rangle / (X_u^{a_u}, X_iX_j -
q_{ij}X_jX_i \text{ for } 1 \le i <j \le n )$$ where $a_u \ge 2$ and
$q_{ij}$ is nonzero. If there exists a subset $\{ i_1, \dots, i_t
\}$ of $\{ 1, \dots, n \}$ such that the subalgebra $\Lambda_{i_1,
\dots, i_t}$ is a homogeneous quantum complete intersection of the
form $\Lambda_t^a$, then $\repdim \Lambda \ge t+1$.
\end{corollary}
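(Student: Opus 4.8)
The plan is to combine the subalgebra chain from Theorem \ref{chain} with the lower bound for homogeneous quantum complete intersections established in Theorem \ref{infinitefieldodd}. The corollary asserts that if some $t$-element subset gives rise to a homogeneous subalgebra $\Lambda_{i_1,\dots,i_t}\cong\Lambda_t^a$, then $\repdim\Lambda\ge t+1$; the natural route is to bound the dimension of the stable category of $\Lambda$ from below by that of $\Lambda_t^a$, and then invoke Proposition \ref{lowerbound}.

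First I would apply Theorem \ref{chain}. By hypothesis the subset $\{i_1,\dots,i_t\}$ can be extended to a subset $\{i_1,\dots,i_{n-1}\}$ of size $n-1$ (assuming $t\le n-1$; the case $t=n$ is immediate since then $\Lambda$ itself is homogeneous and we use Theorem \ref{infinitefieldodd} directly). The chain of inequalities in Theorem \ref{chain} then yields
$$\dim(\underline{\mod}\,\Lambda_{i_1,\dots,i_t})\le\dim(\underline{\mod}\,\Lambda),$$
since $\dim(\underline{\mod}\,\Lambda_{i_1,\dots,i_t})$ is one of the terms in the increasing chain bounded above by $\dim(\underline{\mod}\,\Lambda)$. (Strictly, Theorem \ref{chain} is stated for the chain built on a specified ordering of a size-$(n-1)$ subset, so I would first reorder the generators so that the chosen $t$ variables come first, guaranteeing $\Lambda_{i_1,\dots,i_t}$ appears as an initial segment of the chain.) Next, since $\Lambda_{i_1,\dots,i_t}\cong\Lambda_t^a$ by assumption, the proof of Theorem \ref{infinitefieldodd} shows $\dim(\underline{\mod}\,\Lambda_t^a)\ge t-1$; combining with the previous inequality gives $\dim(\underline{\mod}\,\Lambda)\ge t-1$.

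Finally I would note that $\Lambda$ is selfinjective and non-semisimple (it is Frobenius, as recorded in Section \ref{quantumci}, and non-semisimple because each $a_u\ge2$), so Proposition \ref{lowerbound} applies and gives
$$\repdim\Lambda\ge\dim(\underline{\mod}\,\Lambda)+2\ge(t-1)+2=t+1,$$
which is exactly the claim. The main obstacle is essentially bookkeeping rather than mathematics: one must verify that the chosen subset can be realized as an initial segment of a chain of the form required by Theorem \ref{chain}, i.e.\ that reindexing the generators does not disturb the hypotheses (which it does not, since relabeling variables is an isomorphism of quantum complete intersections). A secondary point to check is that the lower bound $\dim(\underline{\mod}\,\Lambda_t^a)\ge t-1$ extracted from Theorem \ref{infinitefieldodd} holds over an arbitrary field $k$, but this is guaranteed by the finite-field argument invoked at the end of that proof, so no infiniteness assumption on $k$ is needed.
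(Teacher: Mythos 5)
Your argument is correct and follows exactly the route the paper takes: Theorem \ref{chain} gives $\dim(\underline{\mod}\,\Lambda_t^a)\le\dim(\underline{\mod}\,\Lambda)$, the proof of Theorem \ref{infinitefieldodd} supplies $\dim(\underline{\mod}\,\Lambda_t^a)\ge t-1$, and Proposition \ref{lowerbound} converts this into $\repdim\Lambda\ge t+1$. Your additional remarks on reordering the generators, the case $t=n$, and the finite-field situation are sound bookkeeping that the paper leaves implicit.
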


\begin{proof}
By Theorem \ref{chain} the inequality  $\dim ( \underline{\mod}
\Lambda_t^a ) \le \dim ( \underline{\mod} \Lambda )$ holds, and in
the previous proof we showed that $\dim ( \underline{\mod}
\Lambda_t^a )$ is at least $t-1$. Proposition \ref{lowerbound} now
gives $\repdim \Lambda \ge t+1$.
\end{proof}

\begin{remarks}
(i) In the main results we have only considered homogeneous quantum
complete intersections of the form
$$k \langle X_1, \dots, X_n \rangle / (X_i^a, X_iX_j -
qX_jX_i \text{ for } 1 \le i <j \le n ),$$ that is, algebras where
the defining exponents are all equal to $a$ and where $q$ is a
primitive $a$th root of unity. However, the proofs are also valid if
we relax the requirement that the defining exponents of the ``even"
indeterminates $X_2, X_4, \dots$ are equal to $a$, as long as we
require that these exponents belong to $\{ 2, \dots, a \}$. This is
because the key ingredient in the proof when $n$ is even is to show
that the coefficient of the element $x_2x_3^{a-1}x_4 \cdots
x_{n-1}^{a-1}x_n$ is nonzero in a certain homogeneous element.

(ii) Work in progress by Avramov and Iyengar (cf.\ \cite{Avramov2})
shows that the dimension of the stable derived category of a
commutative local complete intersection of codimension $c$ is at
least $c-1$. Consequently, the representation dimension of an Artin
complete intersection is strictly greater than its embedding
dimension. In particular, they have shown that the representation
dimension of $k[X_1, \dots, X_n]/(X_1^{a_1}, \dots, X_n^{a_n})$ is
at least $n+1$ when $a_i \ge 2$.
\end{remarks}

\end{document}